\documentclass[12pt]{article} 
\usepackage{latexsym, a4wide}
\usepackage{amsmath, rotating, color} 
\usepackage{amsfonts,amssymb, amsthm, mathtools} 
\usepackage{pictexwd} 
\usepackage{mathrsfs}
\usepackage[normalem]{ulem} 
\newcommand\unnumberedfootnote[1]{ %
  \let\temp=\thefootnote %
  \renewcommand{\thefootnote}{}%
  \footnote{#1}%
  \let\thefootnote=\temp%
  \addtocounter{footnote}{-1}}

\newcommand{\N}  {{\mathbb N}}
\newcommand{\R}  {{\mathbb R}}

\renewcommand{\L}  {{\mathcal L}}
\newtheorem{theorem}{Theorem}
\newtheorem{proposition}{Proposition}[section]
\newtheorem{lemma}[proposition]{Lemma}

\theoremstyle{definition}
\newtheorem{remark}[proposition]{Remark}
\newtheorem{example}[proposition]{Example}

\pagestyle{headings}
\numberwithin{equation}{section}
\setcounter{secnumdepth}{2}

\setcounter{tocdepth}{2}

\begin{document}
\title{Height and the total mass of the forest of genealogical trees of a large population with general competition.}
\author{ Le V.\and Pardoux  E.}

\maketitle
   
\section{Introduction}

Consider a continuous time branching process, which takes values either in $\N$ or in $\R_+$ (in the second case one speaks of a continuous
state branching process, and we shall consider only those such processes with continuous paths). Such processes can be used as models of population growth. However, in that context one might want to model  interactions between the individuals (e.g. competition for limited resources) so that we
 no longer have a branching process.  Such interactions can increase the number of births, or in contrary
increase the number of deaths. The popular logistic  competition has been considered in Le, Pardoux, Wakolbinger \cite{LPW}, while a much more general type of interaction appears in Ba, Pardoux \cite{BP2}. 

We will assume that for large population size the interaction is of the type of a competition, which limits the size of the population. 
One may then wonder in which cases the interaction is strong enough so that the extinction time (or equivalently the height of the forest of genealogical trees)
remains finite, as the number of ancestors tends to infinity, or even such that the length of the forest of genealogical trees (which in the case of continuous state  is rather called its total mass) remains finite, as the population size tends to infinity.

This question has been addressed in the case of a polynomial interaction in Ba, Pardoux \cite{BP1}. Here we want to generalize those results to a very general type of competition, and we will also show that whenever our condition enforces a finite extinction time (resp. total mass) for the process started with infinite mass, that random variable has some finite exponential moments.

Let us describe the two classes of models which we will consider.

We first describe the discrete state model.
Consider a population evolving in  continuous time with $m$ ancestors at time $t=0$, in which each individual, independently of the others,  gives birth to one child at a constant rate $\lambda $, and dies  after an exponential time with parameter $\mu $.  For each individual we superimpose additional birth and death rates due to interactions  with others  at a certain rate which depends upon the other individuals in the population. 
More precisely, given a function $f:\R_+\to\R$ which satisfies assumption (H1) below, whenever the total size of the population is $k$, the total additional birth rate due to interactions is 
$\sum_{j=1}^k(f(j)-f(j-1))^+$, while  the total additional death rate due to interactions is $\sum_{j=1}^k(f(j)-f(j-1))^-$. Let $X^m_t$ denote the population size at
time $t>0$, originating from $m$ ancestors at time 0. The above description is good enough for prescribing the evolution of $\{X^m_t,\ t\ge0\}$
with one value of $m$. There is a natural way to couple those evolutions for different values of $m$ which will be described in section 2 below, such that $m\mapsto  X^m_t$ is increasing for all $t\ge0$, a.s.

If we consider this population with $m=[Nx]$ ancestors at time $t=0$, replace $\lambda$ by $\lambda _N=2N$, $\mu$ by
 $\mu _N=2N$, $f$ by $f_N(x)=Nf(x/N)$, and define the weighted population
 size process $Z^N_t=N^{-1}X^N_t$,  it is shown in \cite{BP2} that $Z^N$ converges weakly to the unique solution of the SDE (see Dawson, Li \cite{DL})
 \begin{align}\label{DLeq}
 Z^x_t=x+\int_0^tf(Z^x_s)ds+2\int_0^t\int_0^{Z^x_s}W(ds,du),
\end{align}
where $W$ is space--time white noise on $\R_+\times \R_+$. This SDE couples the evolution of the various $\{Z^x_t,\ t\ge0\}$ jointly for all values of $x>0$.

We will use the fact that for a given value of $x>0$, there exists a standard Brownian motion $W$, such that
\begin{equation}\label{eq1}
Z^x_t=x+\int_0^t f(Z^x_s)ds+2\int_0^t\sqrt{Z^x_s}dW_s.
\end{equation}
There is a natural way of describing the genealogical tree of the discrete population. The notion of genealogical tree is discussed for the limiting continuous population as well in \cite{LPW, PW}, in terms of continuous random trees in the sense of Aldous \cite{AD}. Clearly  one can define the height $H^m$ and the length $L^m$ of the discrete forest of genealogical trees, as well as the height of the continuous ``forest of genealogical trees", equal to the lifetime $T^x$ of the process $Z^x$, and the total mass of the same forest of trees, given by $S^x := \int_0^{T^x} Z_t^x dt$.

Our assumption concerning the function $f$ will be

 \textbf{Hypothesis (H1):} $f\in C(\mathbb R_{+}, \mathbb R)$, $f(0)=0$, and there exists $\theta \ge 0$ such that 
 $$f(x+y)-f(y)\leq \theta x \qquad \forall x,y\geq 0.$$
 Note that the hypothesis (H1) implies that the function $\theta x- f(x)$ is increasing. In particular, we have
 $$f(x)\leq \theta x \qquad \forall x\geq 0.$$

The paper is organized as follows. Section $2$ studies the discrete case, i.e. the case of $\N$--valued processes, while section $3$ studies the continuous case, i.e. the case of
$\R$--valued processes. Each of those two sections starts with a subsection presenting
necessary preliminary material. The main results in the discrete case are Theorem $3$ and $4$, while the main results in the continuous case are Theorem $6, 7$ and $8$. Section 4 gives some examples to illustrate our results.

\begin{remark}
This remark aims at helping the reader to build his intuition about our results.
Take first a locally Lipschitz function $f : \R\to \R_+$, such that for simplicity $f(x) > 0$,
for all $x$, and consider the ODE $\dot x = f(x)$. It is easily seen that the solution $x$ explodes
in finite time iff
$\int_0^\infty  dx/f(x) < \infty$, and in that case, denoting $t_\infty$ the time of explosion, 
$\int_0^{t_\infty} x(t)dt < \infty$ iff $\int_0^\infty  xdx/f(x) < \infty$.

Reversing time, we deduce that if now $f(x) < 0$ for all $x$ (or all $x$ sufficiently large),
the same ODE has a solution which satisfies $x(t) \in \R$ for all $t \in (0; T]$ for some $T > 0$ and
$x(t)\to+\infty$ as $t \to 0$ (i.e. in a sense $x(0) = +\infty$) iff for some $M > 0$,
$\int_M^\infty  dx/|f(x)| < \infty$,
and that solution is locally integrable near t = 0 iff
$\int_M^\infty x dx/|f(x)| < \infty$.
The fact that these results can be extended to certain SDEs is essentially our argument
in the continuous population case, see section \ref{sec3} below. Once this is understood, it is clear
that similar results might be expected to hold true in the finite population case, which is
the content of section \ref{sec2}.
\end{remark}

\section{The discrete case}\label{sec2}
\subsection{Preliminaries}	
We consider a continuous time $\mathbb{Z}_{+}$--valued population process $\{X_t^m, t\geq 0, m \geq  1\}$, which starts at time zero from the initial condition $X_0^m=m$, i.e. $m$ is the number of ancestors of the whole population. 
$\{X_t^m, t\geq 0\}$ is a continuous time $\mathbb{Z}_{+}$--valued Markov process, which evolves as follows. If $X_t^m=0$, then $X_s^m=0$ for all $s\geq t$. While at state $k\geq 1$, the process
\begin{align*}
X_t^m \quad \text{jumps to} 
\begin{cases}
& k+1, \quad \text{at rate}\quad \lambda k+F^{+}(k)\\
& k-1, \quad \text{at rate}\quad \mu k+F^{-}(k),
\end{cases}
\end{align*}
where $f$ is a function satisfying (H1), $\lambda , \mu $ are positive constants, and
$$F^{+}(k):=\sum_{\ell=1}^{k} (f(\ell)- f(\ell-1))^{+}, \qquad F^{-}(k):=\sum_{\ell=1}^{k} (f(\ell)- f(\ell-1))^{-}.$$
 We now describe a joint evolution of all $\{X_t^m, t\geq 0\}_{m\geq 1}$, or in other words of the two-parameter process $\{X_t^m, t\geq 0, m \geq  1\}$, which is consistent with the above prescriptions. 
 Suppose that the $m$ ancestors are arranged  from left to right. The left/right order  is passed on to their offsprings: the daughters are placed on the right of their mothers  and  if at a time $t$ the individual $i$ is located at the right of individual $j$, then all the offsprings of $i$ after time $t$ will be placed on the right of all the offsprings of $j$. Since we have excluded multiple births at any given time, this means that the forest of genealogical trees of the population is a planar forest of trees, where the ancestor of the population $X^1_t$ is placed on the far left, the ancestor of $X^2_t-X^1_t$ immediately on his right, etc... Moreover, we draw the genealogical trees in such a way that distinct branches never cross. This defines in a non--ambiguous way an order from left to right within the population alive at each time $t$. 
 We decree that  each individual feels the interaction with the others placed on his left but  not with those on his right. Precisely, at any time $t$, the individual $i$ has an interaction death rate  equal to $\left( f(\L_i(t)+1)-f(\L_i(t))\right)^-$ or an interaction birth rate equal to $\left( f(\L_i(t)+1)-f(\L_i(t))\right)^+$, where $\L_i(t)$ denotes the number of individuals alive at time $t$ who are located on the left of $i$ in the above planar picture.
This means that the  individual $i$ is under attack by the others located at his left  if  $f(\L_i(t)+1)-f(\L_i(t)) <0$ while the interaction improves his fertility if $f(\L_i(t)+1)-f(\L_i(t))>0$.  
Of course, conditionally upon $\L_i(\cdot)$, the occurence of a ``competition death event''  or an ``interaction birth event"  for individual $i$ is independent  of the other birth/death events and of what happens to the other individuals. In order to simplify our formulas, we suppose moreover that  the first individual in the left/right order has a birth rate equal to $\lambda+f^+(1)$ and a death rate equal to  $\mu+f^-(1)$.

 \begin{remark} The functions $F^+$ and $F^-$ may look a bit strange.
 However, if $f$ is either increasing or decreasing, which is the case in particular if $f$ is linear, then $F^+=f^+$ and 
 $F^-=f^-$. 
 \end{remark}
 
 Define the height and length of the genealogical forest of trees by
\begin{equation*}
H^m= \inf \{t>0, X_t^m=0\}, \qquad L^m= \int_0^{H^m} X_t^m dt, \qquad \text{for} \quad m\geq 1.
\end{equation*}

Note that our coupling of the various $X^m$'s makes $H^m$ and $L^m$ a.s. increasing w.r. to $m$. We now study the limits of $H^m$ and $L^m$ as $m\rightarrow \infty $. We first recall some preliminary results on birth and death processes, which can be found in \cite{AWJ, Cat, KT}.\\

Let $Y$ be a birth and death process with birth rate $\lambda _n>0$ and death rate $\mu _n>0$ when in state $n, n\geq 1$. Let 
\begin{equation*}
A= \sum_{n\geq 1} \frac 1{\pi _n}, \qquad S= \sum_{n\geq 1} \frac 1{\pi _n} \sum_{k\geq n+1} \frac{\pi _k}{\lambda _k},
\end{equation*}
where 
\begin{equation*}
\pi _1=1, \qquad \pi _n = \frac{\lambda _1...\lambda _{n-1}\lambda _n}{\mu _2...\mu _{n}}, \quad n\geq 2.
\end{equation*}
We denote by $T_y^m$ the first time the process $Y$ hits $y\in [0,\infty )$ when starting from $Y_0=m$.
\begin{equation*}
T_y^m= \inf\{t>0: Y_t=y \mid Y_0=m\}.
\end{equation*}
We say that $\infty $ is an entrance boundary for $Y$ (see, for instance, Anderson \cite{AWJ}, section $8.1$) if there is $y>0$ and a time $t>0$ such that 
\begin{equation*}
\lim_{m\uparrow \infty }\mathbb P(T_y^m<t) >0.
\end{equation*}
We have the following result (see \cite{Cat}, Proposition $7.10$)
\begin{proposition}\label{prop2.1}
The following are equivalent:
\begin{itemize}
\item[\rm{1)}] $\infty $ is an entrance boundary for $Y$.
\item[\rm{2)}] $A=\infty , S<\infty $.
\item[\rm{3)}] $\lim_{m\uparrow \infty }\mathbb E(T_0^m)<\infty $.
\end{itemize}
\end{proposition}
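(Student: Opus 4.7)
The plan is to compute $\E_m(T_0^m)$ via a recursion, then to deduce (2)$\Leftrightarrow$(3) from the closed form, and to relate (1) and (3) through the monotone coupling and Markov's inequality.

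For the formula, I introduce $u(n) := \E_n(T_{n-1})$, the expected time for $Y$ to descend by one from state $n$. A first-step analysis at state $n$ gives the recursion
$$u(n) \,=\, \frac{1}{\mu_n} + \frac{\lambda_n}{\mu_n}\, u(n+1),$$
which iterates to the explicit series $u(n) = \sum_{k \geq n} \lambda_n \cdots \lambda_{k-1}/(\mu_n \cdots \mu_k)$, with the $k=n$ term read as $1/\mu_n$. By the strong Markov property applied at the successive downcrossings, $\E_m(T_0^m) = \sum_{n=1}^m u(n)$; under the natural monotone coupling, $T_0^m$ is a.s. non-decreasing in $m$, so by monotone convergence $\lim_m \E_m(T_0^m) = \sum_{n=1}^\infty u(n)$, and condition (3) amounts exactly to the finiteness of this double sum.

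For the equivalence (2)$\Leftrightarrow$(3), I would rewrite the general summand in terms of the $\pi$'s: a direct inspection shows that for $k > n \geq 1$ one has $\lambda_n \cdots \lambda_{k-1}/(\mu_n \cdots \mu_k) = (\lambda_n/\mu_n)\cdot \pi_k/(\lambda_k \pi_n)$. Exchanging the order of summation then yields
$$\sum_{n \geq 1} u(n) \;=\; \sum_{n \geq 1} \frac{1}{\mu_n} + \sum_{n \geq 1} \frac{\lambda_n}{\mu_n \pi_n}\!\!\sum_{k \geq n+1}\!\! \frac{\pi_k}{\lambda_k},$$
and the task reduces to showing that the finiteness of the right-hand side is equivalent to $A = \infty$ together with $S < \infty$. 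When $A < \infty$, the chain is explosive from state $m$ as $m \to \infty$ by the classical birth-and-death criterion (see \cite{AWJ}), so an ``absorption at $\infty$'' event of positive probability forces $\E_m(T_0^m) \to \infty$; when $A = \infty$, the first boundary sum is harmless in comparison to the second (using $A = \infty$ to control the tail), and finiteness of the remaining double sum is precisely $S < \infty$.

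Finally, for (1)$\Leftrightarrow$(3): the direction (3)$\Rightarrow$(1) is immediate from $T_y^m \le T_0^m$ and Markov's inequality, since $\P(T_y^m < t) \ge 1 - \E_m(T_0^m)/t$ is bounded below uniformly in $m$ for $t$ large. For the converse, the monotone coupling makes $\P(T_y^m < t)$ non-increasing in $m$; if its limit is $p > 0$, then $\P(T_y^m < t) \ge p$ uniformly, and iterating the strong Markov property at $t, 2t, \ldots$ (the coupling ensures that on the event of non-absorption, the state at time $jt$ dominates $m$, so the next probability-of-descent estimate still applies) produces the geometric tail bound $\P(T_y^m > kt) \le (1-p)^k$ uniform in $m$. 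This gives $\sup_m \E_m(T_y^m) \le t/p$, and writing $\E_m(T_0^m) = \E_m(T_y^m) + \E_y(T_0^y)$ via the strong Markov property at $T_y^m$ (with $\E_y(T_0^y) < \infty$ for fixed $y$) delivers (3). The main obstacle will be the bookkeeping in the (2)$\Leftrightarrow$(3) step, in particular the careful identification of boundary contributions at $n=1$ and the precise use of $A = \infty$ to link non-explosion with the rearranged sum.
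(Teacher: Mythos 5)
The paper does not actually prove this proposition: it is quoted from \cite{Cat} (Proposition 7.10) and used as a black box, so there is no in-paper argument to compare against. Your strategy --- an explicit series for $\E(T_0^m)$ via the one-step-down times $u(n)=\E_n(T_{n-1})$, plus Markov's inequality and a geometric-trials argument for the link with the entrance-boundary property --- is the standard route and is in principle the right one. But as written it has a genuine gap at its central step.

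Forward iteration of $u(n)=\frac{1}{\mu_n}+\frac{\lambda_n}{\mu_n}u(n+1)$ only gives $u(n)\ge\sum_{k\ge n}\lambda_n\cdots\lambda_{k-1}/(\mu_n\cdots\mu_k)$, since the nonnegative remainder $\frac{\lambda_n\cdots\lambda_{n+N}}{\mu_n\cdots\mu_{n+N}}u(n+N+1)$ need not vanish: the series is only the \emph{minimal} nonnegative solution of the recursion. The identity genuinely fails when $A<\infty$. Take $\lambda_n=4^n$, $\mu_n=\tfrac12\,4^n$: then $\lambda_n\cdots\lambda_{k-1}/(\mu_n\cdots\mu_k)=2^{1-n-k}$, so every series converges, yet $\lambda_n/\mu_n=2$ makes the jump chain transient, so $\P_n(T_{n-1}=\infty)>0$ and $u(n)=+\infty$ for every $n$ (one checks $A<\infty$ and $S<\infty$ here). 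This is not a cosmetic issue: using $\frac{\lambda_n}{\mu_n\pi_n}=\frac{1}{\pi_{n-1}}$ for $n\ge2$ and the fact that the $k=n+1$ diagonal of $S$ is $\sum_n 1/\mu_{n+1}$, your rearranged right-hand side is finite if and only if $S<\infty$, with no role left for $A$; so if the series identity held unconditionally you would ``prove'' that (3) is equivalent to $S<\infty$ alone, which the example above refutes. The repair is to split first on $A$: if $A<\infty$ the scale function is bounded, so $\P_m(T_0=\infty)>0$ and (3) fails (note $A<\infty$ is transience, not explosiveness as you write --- though your conclusion there is still correct); if $A=\infty$ the process a.s.\ reaches $0$ before exploding, and the series identity can then be justified by truncating at level $N$, solving the finite system, and letting $N\to\infty$ by monotone convergence. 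Two smaller points: in (1)$\Rightarrow$(3), the surviving process at time $jt$ is only known to be at some state $z>y$, not at a state dominating $m$; what you actually need is $\inf_{z>y}\P_z(T_y<t)\ge p$, which does follow from the monotone coupling since $z\mapsto\P_z(T_y<t)$ is non-increasing with limit $p>0$. And $\E_y(T_0^y)<\infty$ for fixed $y$ deserves a word: it follows by backward induction from $u(y+1)\le\sup_m\E_m(T_y^m)<\infty$ through the recursion.
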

We now want to apply the above result to the process $X_t^m$, in which case $\lambda _n= \lambda n+F^{+}(n), \mu _n= \mu n+F^{-}(n), n\geq 1$. We will need the following lemmas.
\begin{lemma}\label{le2.2}
Let $f$ be a function satisfying (H1), $a\in \mathbb R$ be a constant. If there exists $a_0>0$ such that $f(x)\not= 0, f(x)+ ax \not= 0$ for all $x\geq a_0$, then we have that 
\begin{equation*}
\int_{a_0}^\infty \frac 1{\mid f(x)\mid } dx <\infty \Leftrightarrow \int_{a_0}^\infty \frac 1{\mid ax+ f(x)\mid } dx <\infty ,
\end{equation*}
and when those equivalent conditions are satisfied, we have
$$\lim_{x\rightarrow \infty } \frac{f(x)}{x}= -\infty .$$
\end{lemma}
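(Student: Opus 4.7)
The plan is to exploit the fact that Hypothesis (H1) supplies not merely $f(x)\le\theta x$ but the stronger statement that $h(x):=\theta x-f(x)$ is non-decreasing on $\R_+$. This monotonicity is exactly what will let me deduce from finiteness of $\int dx/|f(x)|$ that $f(x)/x\to-\infty$, and once that rigidity statement is in hand, the equivalence of the two integral conditions becomes immediate from $|ax+f(x)|\sim|f(x)|$.

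The main step I would carry out first is: if $\int_{a_0}^\infty dx/|f(x)|<\infty$, then $f(x)/x\to-\infty$. Continuity and non-vanishing of $f$ on $[a_0,\infty)$ force it to have constant sign there; the positive sign is immediately excluded, since $0<f(x)\le\theta x$ would make $\int dx/|f(x)|$ diverge like $\int dx/x$. So $f<0$ on $[a_0,\infty)$ and $|f(x)|=h(x)-\theta x$ with $h$ non-decreasing. If, for contradiction, $\liminf_{x\to\infty}|f(x)|/x\le M<\infty$, I would extract greedily a sequence $x_n\uparrow\infty$ with $x_{n+1}\ge 2x_n$ and $|f(x_n)|\le(M+1)x_n$; the monotonicity of $h$ would then yield $|f(x)|\le h(x_n)\le(\theta+M+1)x_n$ uniformly on $[x_n/2,x_n]$, so each such interval contributes at least $1/[2(\theta+M+1)]$ to the integral. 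Summing over $n$ contradicts the assumed finiteness.

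The equivalence itself then comes for free: once $f(x)/x\to-\infty$, one has $|ax+f(x)|/|f(x)|\to 1$, so the two integrands are asymptotic and their integrals converge or diverge together; in particular the concluding statement of the lemma is already part of the output of the main step. For the reverse implication I would apply the main step to $g(x):=f(x)+ax$, which also satisfies (H1) with constant $\max(\theta+a,0)$ since $g(x+y)-g(y)\le(\theta+a)x$, deducing $g(x)/x\to-\infty$ and hence $|f(x)|\sim|g(x)|$.

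The main obstacle is the rigidity step above. Without the monotonicity of $\theta x-f(x)$ one could not transfer a pointwise bound on $|f(x_n)|$ to an entire interval of comparable length, and the divergence-of-a-sum argument would collapse; so the crux of the proof is to recognize that (H1) is being used through this monotonicity, not merely through the bound $f\le\theta x$. The rest is routine asymptotic comparison.
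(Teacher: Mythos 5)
Your proof is correct and follows essentially the same route as the paper's: both reduce everything to the single implication $\int_{a_0}^\infty dx/|f(x)|<\infty \Rightarrow f(x)/x\to-\infty$, both exploit the monotonicity of $\theta x - f(x)$ coming from (H1) to transfer a pointwise bound on $f$ to an interval of length comparable to $x$ (the paper bounds $\int_x^{2x} du/(\beta u - f(u))$ from below by $(2\beta - f(2x)/x)^{-1}$, you bound the contribution of $[x_n/2,x_n]$ from below by a constant), and both then conclude via $|ax+f(x)|\sim|f(x)|$ together with applying the same implication to $f(x)+ax$, which you correctly verify still satisfies (H1). No gaps.
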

\begin{proof}
We need only show that
$$\int_{a_0}^\infty \frac 1{\mid f(x)\mid } dx <\infty \Rightarrow \int_{a_0}^\infty \frac 1{\mid ax+ f(x)\mid } dx <\infty.$$
Indeed, this will imply the same implication for pair $f'(x)=f(x)+ax$, $f'(x)-ax$, which is the conversed result.
Because $f(x)\leq \theta x$ for all $x\geq 0$, we can easily deduce from $\int_{a_0}^\infty \frac 1{\mid f(x)\mid } dx <\infty$ that $$f(x)<0 \qquad \forall x\geq a_0.$$
Let $\beta $ be a constant such that $\beta >\theta $. We have
$$\int_{a_0}^\infty \frac 1{ \beta  x- f(x)} dx<\int_{a_0}^\infty \frac 1{ -f(x)} dx<\infty .$$
It implies that $$\lim_{x\rightarrow \infty } \int_x^{2x} \frac 1{\beta  u-f(u)} du =0.$$
But since the function $x\mapsto  \beta x-f(x)$ is increasing,
$$\int_x^{2x} \frac 1{\beta  u-f(u)} du\geq \big(2\beta -\frac{f(2x)}{x}\big)^{-1}.$$ 
We deduce that $\lim_{x\rightarrow \infty } \frac{f(x)}{x}=-\infty $. Hence there exists $a_1>a_0$ such that $f(x)<-2|a|x$ for all $x\geq a_1$. The result follows from 
$$\int_{a_1}^\infty \frac 1{\mid ax+f(x)\mid } dx< \int_{a_1}^\infty \frac 2{ -f(x)} dx <\infty .$$
\end{proof}
\begin{lemma}\label{le2.3}
Let $f$ be a function satisfying (H1). For all $n\geq 1$ we have the two inequalities
\begin{align*}
 F^{+}(n)&\leq \theta n\\
 -f(n)\leq F^{-}(n)&\leq \theta n-f(n).
\end{align*}
\end{lemma}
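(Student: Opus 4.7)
The plan is to rely on two elementary observations: a pointwise bound on the increments $f(\ell)-f(\ell-1)$ coming directly from (H1), together with the telescoping identity relating $F^+$ and $F^-$ to $f$ itself via the decomposition $a = a^+ - a^-$.

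First I would establish the bound on $F^+$. Setting $x=1$, $y=\ell-1$ in the hypothesis (H1) yields $f(\ell) - f(\ell-1) \leq \theta$ for every integer $\ell \geq 1$. Taking positive parts gives $(f(\ell)-f(\ell-1))^+ \leq \theta$, and summing over $\ell = 1,\dots,n$ produces $F^+(n) \leq \theta n$. Note that the lower bound $F^+(n) \geq 0$ is immediate from the definition.

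Second, I would obtain the two bounds on $F^-$ from a telescoping computation. Writing each increment as $f(\ell)-f(\ell-1) = (f(\ell)-f(\ell-1))^+ - (f(\ell)-f(\ell-1))^-$ and summing over $\ell$ from $1$ to $n$, I get
\begin{equation*}
F^+(n) - F^-(n) = \sum_{\ell=1}^{n} \bigl(f(\ell)-f(\ell-1)\bigr) = f(n) - f(0) = f(n),
\end{equation*}
where I have used the assumption $f(0)=0$. Hence $F^-(n) = F^+(n) - f(n)$. Combining this identity with $0 \leq F^+(n) \leq \theta n$ from the first step gives the two desired inequalities $-f(n) \leq F^-(n) \leq \theta n - f(n)$ at once.

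There is no substantial obstacle: the statement is a direct consequence of (H1) applied at unit increments and of the identity $a^+ - a^- = a$. The only thing worth flagging is that the telescoping step crucially relies on $f(0)=0$, which is part of (H1) and ensures the lower bound $F^-(n) \geq -f(n)$ is sharp when $f$ is monotone decreasing.
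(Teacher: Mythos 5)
Your proof is correct and follows essentially the same route as the paper: the unit-increment bound $f(\ell)-f(\ell-1)\leq\theta$ from (H1) and the telescoping identity $F^{-}(n)-F^{+}(n)=-f(n)$. The paper obtains the lower bound $F^{-}(n)\geq -f(n)$ by summing $(f(\ell)-f(\ell-1))^{-}\geq f(\ell-1)-f(\ell)$ directly, while you deduce it from the identity together with $F^{+}(n)\geq 0$; these are equivalent.
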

\begin{proof}
The result follows from the facts that for all $n\geq 1$
\begin{align*}
(f(n)-f(n-1))^{+}&\leq \theta \\
 (f(n)-f(n-1))^{-}&\geq f(n-1)-f(n)\\
F^{-}(n)-F^{+}(n)&=-f(n).
\end{align*}
\end{proof}
\begin{proposition}\label{prop2.4}
Assume $f$ is a function satisfying (H1) and there exists $a_0>0$ such that $f(x)\not= 0$ for all $x\geq a_0$. Then $\infty $ is an entrance boundary for $X$ if and only if
\begin{equation*}
\int_{a_0}^\infty \frac 1{\mid f(x)\mid } dx <\infty.
\end{equation*}
\end{proposition}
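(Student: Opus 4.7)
My approach is to apply Proposition \ref{prop2.1}, so that the claim reduces to the equivalence of ``$A=\infty$ and $S<\infty$'' with $\int_{a_0}^\infty 1/|f(x)|\,dx<\infty$. The analysis will hinge on two non-decreasing auxiliary functions $g(x):=\theta x-f(x)$ (non-decreasing by (H1)) and $h(x):=\mu x+g(x)$, together with the identity $\pi_k/\lambda_k=\pi_{k-1}/\mu_k$ (which follows from $\pi_k=\pi_{k-1}\lambda_k/\mu_k$, valid for $k\ge 3$). Interchanging the order of summation then yields the reformulation
$$S \;=\; \sum_{k\ge 2}\frac{R_k}{\mu_k}, \qquad R_k \;:=\; \sum_{n=1}^{k-1}\prod_{j=n+1}^{k-1}\frac{\lambda_j}{\mu_j},$$
up to a harmless boundary term, together with the crude lower bound $S\ge\sum_n 1/\mu_{n+1}$ obtained by keeping only the term $k=n+1$ in the inner sum. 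By continuity, $f$ has constant sign on $[a_0,\infty)$; Lemma \ref{le2.3} gives the pointwise bound $\mu_n\le h(n)$, and when $f<0$ one also has $\mu_n\ge\mu n+|f(n)|$.

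For the direction $\int_{a_0}^\infty 1/|f|\,dx<\infty\Rightarrow\infty$ is an entrance boundary, the argument in the proof of Lemma \ref{le2.2} first shows that $f<0$ on $[a_0,\infty)$ and $f(x)/x\to-\infty$. By Lemma \ref{le2.3} this forces $\rho_n:=\lambda_n/\mu_n\to 0$, whence $\pi_n\to 0$ and $A=\infty$. Fixing $N$ with $\rho_j\le 1/2$ for $j\ge N$, a telescoping of the geometric product bounds $R_k$ uniformly by some constant $C$, so $S\le C\sum_k 1/\mu_k$ up to finitely many initial terms. Since $h\ge|f|$ one has $\int 1/h\,dx\le\int 1/|f|\,dx<\infty$; the integral test (valid because $h$ is non-decreasing) yields $\sum_n 1/h(n)<\infty$, and the elementary comparison $\mu_n\ge\mu n+|f(n)|\ge(\mu/(\mu+\theta))h(n)$ then gives $\sum 1/\mu_n<\infty$, hence $S<\infty$.

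For the converse, I would show $\sum_n 1/\mu_n=\infty$ whenever $\int 1/|f|\,dx=\infty$, which via the lower bound for $S$ forces $S=\infty$ and so rules out entrance. Since $\mu_n\le h(n)$, it suffices to prove $\sum_n 1/h(n)=\infty$, equivalently (by the integral test and monotonicity of $h$) $\int 1/h\,dx=\infty$. When $f\ge 0$ on $[a_0,\infty)$ this is automatic because $h(x)\le(\mu+\theta)x$. When $f<0$, I would argue by contradiction: if $\int 1/h\,dx<\infty$ then $h(x)/x\to\infty$ (a routine consequence of $h$ being non-decreasing, by comparison on dyadic intervals), hence $|f(x)|/x\to\infty$ and $h(x)\le 2|f(x)|$ for large $x$, which would give $\int 1/|f|\,dx\le 2\int 1/h\,dx<\infty$, contradicting the hypothesis.

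The main obstacle is the transfer between the discrete sum $\sum 1/\mu_n$ and the integral $\int 1/|f(x)|\,dx$, since $|f|$ itself is not assumed monotone. Routing through the monotone majorant $h=\mu x+\theta x-f$ supplied by (H1) makes the integral test applicable in both directions; the uniform bound $R_k\le C$, which rests on the decay $\rho_n\to 0$, is then what converts the simple tail control of $\sum 1/\mu_k$ into control of the double sum $S$.
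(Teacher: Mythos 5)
Your proposal is correct, and while its skeleton coincides with the paper's (reduce to Proposition \ref{prop2.1}; get $A=\infty$ from $\pi_{n+1}/\pi_n=\lambda_{n+1}/\mu_{n+1}\to 0$; get $S=\infty$ in the divergent case from the crude bound $S\geq\sum_n 1/\mu_{n+1}$ plus Lemmas \ref{le2.2} and \ref{le2.3}), you handle the one genuinely delicate step --- proving $S<\infty$ when $\int_{a_0}^\infty dx/|f(x)|<\infty$ --- by a different and arguably more transparent route. The paper keeps the double sum in the form $\sum_n \pi_n^{-1}\sum_{k>n}\pi_k/\lambda_k$ and controls it via the arithmetic--geometric mean inequality applied to the products $a_{n+1}\cdots a_k$, followed by a reindexing that eventually reduces everything to $\sum_i \lambda_i/(i(\mu_i-\lambda_i))$ and an appeal to Lemma \ref{le2.2}. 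You instead use the identity $\pi_k/\lambda_k=\pi_{k-1}/\mu_k$ to rewrite $S=\sum_k R_k/\mu_k$ with $R_k$ a sum of products of the ratios $\rho_j=\lambda_j/\mu_j$, and exploit $\rho_j\to 0$ (a direct consequence of $f(x)/x\to-\infty$) to bound $R_k$ uniformly by a geometric series; the conclusion then follows from $\sum_k 1/\mu_k<\infty$, which you obtain by routing the comparison with $\int dx/|f|$ through the monotone majorant $h(x)=(\mu+\theta)x-f(x)$ and the integral test. This avoids the AM--GM computation entirely and makes explicit the sum-versus-integral transfer that the paper leaves implicit; the only costs are bookkeeping ones (the boundary discrepancy at $k=2$ and $n=1$ coming from the paper's convention for $\pi_n$, and the finitely many indices $n<a_0$ where $f(n)$ need not be negative), both of which you correctly flag as harmless.
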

\begin{proof}
If $\int_{a_0}^\infty \frac 1{\mid f(x)\mid } dx =\infty$, then (recall that since $\mu >0, (\mu+\theta)x-f(x)$ is non--negative and increasing) $$\int_{a_0}^\infty \frac 1{(\mu +\theta )x- f(x)} dx =\infty ,$$ by Lemma \ref{le2.2}. In this case,
\begin{align*}
S&\geq \sum_{n\geq 1} \frac{\pi _{n+1}}{\lambda _{n+1}\pi _n}\\
&= \sum_{n\geq 1}\frac 1{\mu _{n+1}}\\
&= \sum_{n\geq 2} \frac 1{\mu n+ F^{-}(n)}\\
&\geq \sum_{n\geq 2} \frac 1{(\mu +\theta )n- f(n)}\\
&= \infty .
\end{align*}
Therefore, $\infty $ is not an entrance boundary for $X$, by Proposition \ref{prop2.1}. On the other hand, if $\int_{a_0}^\infty \frac 1{\mid f(x)\mid } dx <\infty $, then $\lim_{x\rightarrow \infty } \frac{f(x)}{x}=-\infty$, by Lemma \ref{le2.2}. By Lemma \ref{le2.3} we have
\begin{equation*}
\lim_{n\rightarrow \infty }\frac{\pi _{n+1}}{\pi _n}= \lim_{n\rightarrow \infty } \frac{\lambda n+F^{+}(n)}{\mu n+F^{-}(n)} \leq \lim_{n\rightarrow \infty } \frac{(\lambda +\theta )n}{\mu n-f(n)}= 0,
\end{equation*}
so that 
\begin{equation*}
A= \sum_{n\geq 1} \frac 1{\pi _n}= \infty .
\end{equation*}
Set $a_n= \lambda _n/ \mu_n $, then there exists $ n_0\geq 1$ such that $ a_n<1$ for all $n\geq n_0$. The inequality of arithmetic and geometric means states that for all $m>0$ and $x_1, x_2, ..., x_m>0$,
$$\frac{x_1+x_2+ ...+ x_m}{m}\geq \sqrt[m]{x_1 x_2 ... x_m},$$
so that for all $k>n>0$,
$$a_{n+1}^{k-n}+...+ a_k^{k-n}\geq (k-n) a_{n+1}...a_k. $$
Then
\begin{align*}
\sum_{n\geq n_0} \frac 1{\pi _n} \sum_{k\geq n+1} \frac{\pi _k}{\lambda _k}
&\leq \frac 1{\lambda }\sum_{n\geq n_0}  \sum_{k\geq n+1} \frac 1{k} a_{n+1}...a_k \\
&\leq \frac 1{\lambda }\sum_{n\geq n_0}  \sum_{k\geq n+1} \frac 1{k(k-n)} (a_{n+1}^{k-n}+...+ a_k^{k-n})\\
&= \frac 1{\lambda }\sum_{k\geq n_0+1}  \sum_{n=1}^{k-n_0} \frac 1{k n} (a_{k-n+1}^{n}+...+ a_k^{n})\\
&= \frac 1{\lambda } \sum_{i\geq n_0+1}\sum_{n\geq 1} a_i^n \sum_{k=i}^{n-1+i} \frac 1{k n}\\
&\leq \frac 1{\lambda }\sum_{i\geq n_0+1}\sum_{n\geq 1} \frac{a_i^n}{i} \\
&= \frac 1{\lambda }\sum_{i\geq n_0+1} \frac{a_i}{i(1-a_i)} \\
&= \frac 1{\lambda }\sum_{i\geq n_0+1} \frac{\lambda _i}{i(\mu _i-\lambda _i)} \\
&= \sum_{i\geq n_0+1} \frac{\lambda i+F^{+}(i)}{\lambda i(\mu i -\lambda i+F^{-}(i)-F^{+}(i))} \\
&\leq \frac{\lambda +\theta }{\lambda } \sum_{i\geq n_0+1} \frac 1{\mu i-\lambda i- f(i)}\\
&<\infty,
\end{align*}
where we have used Lemma \ref{le2.2} to conclude.
Hence $S<\infty $. The result follows from Proposition \ref{prop2.1}.
\end{proof}
We can now prove
\begin{theorem}\label{th1}
Assume $f$ is a function satisfying $(H1)$ and there exists $a_0>0$ such that $f(x)\not= 0$ for all $x\geq a_0$. We have
\begin{itemize}
\item[\rm{1)}] If $\int_{a_0}^\infty \frac 1{\mid f(x)\mid } dx =\infty$, then
 $$\sup_{m>0} T_0^m = \infty \quad a.s.$$
\item[\rm{2)}] If  $\int_{a_0}^\infty \frac 1{\mid f(x)\mid } dx <\infty$, then
 \begin{equation*}
\mathbb{E}\big(\sup_{m>0} T_0^m \big) < \infty.
\end{equation*}
\end{itemize}
\end{theorem}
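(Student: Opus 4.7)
My plan is to deduce both parts from the entrance boundary dichotomy established in Proposition \ref{prop2.4}, combined with the a.s.\ monotonicity of hitting times granted by the coupling constructed in Section \ref{sec2}. Since $T_0^m = H^m$, and under the planar coupling one has $X_t^{m'}\le X_t^m$ pointwise for $m'\le m$, the a.s.\ increase of $m\mapsto T_0^m$ is immediate from the intermediate-value argument: whenever $X^m$ first hits some level $y$, the process $X^{m'}$ is already at or below $y$, so it has hit $y$ at an earlier time. The same argument shows that $m\mapsto T_y^m$ is a.s.\ non-decreasing for any fixed $y\ge 0$.

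For part 2), the implication $\int_{a_0}^\infty |f|^{-1}dx<\infty\Rightarrow \infty$ is an entrance boundary (Proposition \ref{prop2.4}) combined with the equivalence $(1)\Leftrightarrow(3)$ of Proposition \ref{prop2.1} yields $\sup_m\E(T_0^m)=\lim_m\E(T_0^m)<\infty$. Since $T_0^m\nearrow\sup_m T_0^m$ a.s., monotone convergence gives
\begin{equation*}
\E\Bigl(\sup_{m>0} T_0^m\Bigr)=\lim_{m\to\infty}\E(T_0^m)<\infty,
\end{equation*}
which is the desired estimate.

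For part 1), under the hypothesis $\int_{a_0}^\infty|f|^{-1}dx=\infty$, Proposition \ref{prop2.4} says $\infty$ is \emph{not} an entrance boundary for $X$. By the very definition of entrance boundary recalled before Proposition \ref{prop2.1}, this means that for every $y>0$ and every $t>0$,
\begin{equation*}
\lim_{m\to\infty}\P(T_y^m<t)=0.
\end{equation*}
Fix such a $y>0$. Since the sequence $(T_y^m)_{m\ge 1}$ is a.s.\ non-decreasing by the coupling, it converges a.s.\ to a $[0,\infty]$-valued limit $T_y^\infty$, and the displayed convergence in probability forces $\P(T_y^\infty<t)=0$ for every $t>0$; hence $T_y^\infty=\infty$ a.s. Because $T_0^m\ge T_y^m$ for $m>y$, we conclude $\sup_{m>0} T_0^m=\infty$ a.s.

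The only delicate point is the upgrade from ``$\lim_m\P(T_y^m<t)=0$'' to ``$T_y^m\to\infty$ a.s.'', but this is what the monotone coupling is designed for; once that observation is in hand, the theorem reduces to reading off the two directions of Propositions \ref{prop2.1} and \ref{prop2.4} and applying monotone convergence in part 2) and a termwise supremum bound in part 1).
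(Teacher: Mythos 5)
Your proposal is correct and follows essentially the same route as the paper: both parts reduce to the entrance-boundary dichotomy of Proposition \ref{prop2.4} together with Proposition \ref{prop2.1}, with the a.s.\ monotonicity of $m\mapsto T_y^m$ from the coupling used to upgrade $\lim_m\P(T_y^m<t)=0$ to $\sup_m T_0^m=\infty$ a.s.\ in part 1) and to apply monotone convergence in part 2). Your treatment is in fact slightly more careful than the paper's, which states the limit directly for $y=0$ and leaves the monotone convergence step implicit.
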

\begin{proof}
If $\int_{a_0}^\infty \frac 1{\mid f(x)\mid } dx =\infty$, then by Proposition \ref{prop2.4}, $\infty $ is not an entrance boundary for $X$. It means that for all $t>0$,
\begin{equation*}
\lim_{m\uparrow \infty } \mathbb P (T_0^m <t) = 0.
\end{equation*}
Hence for all $t>0$, since $m\to T^m_0$ is increasing a.s.,
\begin{equation*}
\mathbb P ( \sup_{m>0} T_0^m <t) = 0,
\end{equation*}
hence
\begin{equation*}
\sup_{m>0} T_0^m =\infty  \qquad a.s.
\end{equation*}
The second part of the theorem is a consequence of Proposition \ref{prop2.4} and Proposition \ref{prop2.1}.
\end{proof}
\begin{remark}\label{re2.5}
The first part of Theorem \ref{th1} is still true when $\lambda_n =0, n\geq 1$. In fact, in this case we have
$$T_0^m \doteq \sum_{n=1}^m \theta _n,$$
where $\doteq $ denotes equality in law, $\theta _n$ represents the first passage time from state $n$ to state $n-1$,
$$\theta _n = \inf \{ t>0: X_t= n-1 \mid X_0=n\}.$$
Recalling the fact that $\theta _n$ is exponentially distributed with parameter $\mu n+ F^{-}(n)$, we have (see Lemma 4.3, Chapter 7 in \cite{PE} )
$$\sup_{m>0} T_0^m = \infty \quad a.s.\quad \Leftrightarrow \quad \sum_{n=1}^\infty \frac 1{\mu n+F^{-}(n)} =\infty .$$
The result follows by Lemma \ref{le2.2} and Lemma \ref{le2.3}.
\end{remark}
Here a question arises: in the case $\int_{a_0}^\infty \frac 1{\mid f(x)\mid } dx <\infty$, whether higher moments of $\sup_{m>0} T_0^m$ are also finite or not. We will see that the answer is Yes. Indeed, we can prove that it has some finite exponential moments.
\begin{theorem}\label{th2}
Suppose that $f$ is a function satisfying (H1) and there exists $a_0>0$ such that $f(x)\not= 0$ for all $x\geq a_0$. If $\int_{a_0}^\infty \frac 1{\mid f(x)\mid } dx <\infty$ we have
\begin{itemize}
\item[\rm{1)}] For any $a>0$, there exists $y_a\in \mathbb Z_+$ such that 
 $$\sup_{m>y_a} \mathbb E \big(e^{aT_{y_a}^m }\big) < \infty .$$
\item[\rm{2)}] There exists some positive constant $c$ such that 
 \begin{equation*}
\sup_{m>0} \mathbb E \big(e^{cT_{0}^m }\big) < \infty .
\end{equation*}
\end{itemize}
\end{theorem}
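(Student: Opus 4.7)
My approach is to prove part 1 by constructing an explicit Lyapunov function for $X$, and to deduce part 2 from a strong Markov decomposition combined with the moment generating function recursion for the excursions.

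The key asymptotic input: from $\int_{a_0}^\infty dx/|f(x)| < \infty$, Lemma \ref{le2.2} gives $f(x)/x \to -\infty$, and Lemma \ref{le2.3} pinches $\mu_n$ between $\mu n - f(n)$ and $(\mu+\theta)n - f(n)$ while $\lambda_n \le (\lambda+\theta)n$; together these yield $\sum_{n \ge 1} 1/\mu_n < \infty$ and $\lambda_n/\mu_{n+1} \to 0$. For part 1, fix $a > 0$ and choose $y_a$ large enough that $\sum_{k > y_a} 1/\mu_k \le 1/(4a)$ and $\lambda_n/\mu_{n+1} \le 1/4$ for all $n \ge y_a$, and define
$$V(n) := \frac{1}{2a} - \sum_{k > n} \frac{1}{\mu_k}, \qquad n \ge y_a,$$
which takes values in $[1/(4a),\, 1/(2a)]$. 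A direct telescoping computation gives $LV(n) = \lambda_n/\mu_{n+1} - 1 \le -3/4$, hence $(L+a)V(n) \le -1/4 < 0$ on $\{n > y_a\}$. Therefore $t \mapsto e^{a(t \wedge T_{y_a}^m)}\,V(X_{t \wedge T_{y_a}^m})$ is a nonnegative supermartingale; letting $t \to \infty$ (using $T_{y_a}^m < \infty$ a.s. from Proposition \ref{prop2.4}) and applying Fatou gives $V(y_a)\, \mathbb{E}[e^{a T_{y_a}^m}] \le V(m) \le 1/(2a)$, uniformly in $m > y_a$.

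For part 2, the strong Markov property at $T_{y_a}^m$ gives the independent decomposition $T_0^m = T_{y_a}^m + \tilde T_0^{y_a}$ with $\tilde T_0^{y_a} \stackrel{d}{=} T_0^{y_a}$, reducing matters to showing $\mathbb{E}[e^{c T_0^{y_a}}] < \infty$ for some small $c > 0$. Writing $T_0^{y_a} = \sum_{n=1}^{y_a} \theta_n$ with $\theta_n$ the (independent) first-passage time from $n$ to $n-1$, and conditioning on the first jump from $n$, the MGFs $\psi_n(c) := \mathbb{E}[e^{c\theta_n}]$ satisfy
$$\psi_n(c) = \frac{\mu_n}{\mu_n - c - \lambda_n(\psi_{n+1}(c) - 1)},$$
valid whenever the denominator is positive. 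Part 1 provides $\psi_{y_a+1}(a) < \infty$, so a finite downward induction on $n = y_a, y_a - 1, \ldots, 1$, choosing $c \in (0,a]$ small enough at the outset that $\psi_n(c) - 1$ stays small at every step (using $\psi_n(c) \to 1$ as $c \to 0^+$ by dominated convergence), keeps each denominator positive and yields $\prod_{n=1}^{y_a} \psi_n(c) < \infty$. Then $\mathbb{E}[e^{cT_0^m}] = \mathbb{E}[e^{cT_{y_a}^m}]\cdot \mathbb{E}[e^{cT_0^{y_a}}]$ is uniformly bounded in $m > y_a$; for $m \le y_a$, the coupling making $T_0^m$ increasing in $m$ gives $T_0^m \le T_0^{y_a}$ a.s.

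The main technical hurdle I anticipate is the calibration of the Lyapunov function $V$: the constants must be tuned so that $V$ is simultaneously positive on $\{n \ge y_a\}$, bounded, and a supersolution of $(L+a)V \le 0$, which in turn relies on the sharp comparison $\mu_n \sim -f(n)$ coming from Lemmas \ref{le2.2}--\ref{le2.3}. Everything else is routine martingale and Markov-property bookkeeping.
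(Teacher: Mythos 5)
Your proposal is correct, and while Part 1 is in the same spirit as the paper's argument, Part 2 follows a genuinely different route. For Part 1 the paper also runs an $e^{at}\times(\text{Lyapunov function})$ supermartingale argument, but with $J(m)=\sum_{n=n_a}^{m-1}\pi_n^{-1}\sum_{k\ge n+1}\pi_k/\lambda_k$, i.e.\ the tail of the entrance-boundary quantity $S$, which satisfies $AJ=-1$ exactly; your $V(n)=\tfrac1{2a}-\sum_{k>n}\mu_k^{-1}$ is simpler and gives $LV=\lambda_n/\mu_{n+1}-1\le -3/4$, at the cost of the two auxiliary facts $\sum_n\mu_n^{-1}<\infty$ and $\lambda_n/\mu_{n+1}\to0$ — both do follow from Lemmas \ref{le2.2} and \ref{le2.3} via $f(x)/x\to-\infty$ and a monotonicity-based integral test, but you should record that step. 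For Part 2 the paper does \emph{not} use the skip-free decomposition: it fixes a window $T$, lets $p>0$ be the probability of extinction within time $T$ from level $M$, and stochastically dominates $\sup_m T_0^m$ by $\zeta T+\sum_{i=1}^{\zeta}\eta_i$ with $\zeta$ geometric$(p)$ and $\eta_i$ i.i.d.\ copies of $T_M=\sup_{m>M}T_M^m$, finishing with Cauchy--Schwarz; that regeneration argument is the one reused verbatim for the diffusion in Theorem \ref{th5}. Your alternative — factorizing $T_0^m=T_{y_a}^m+\tilde T_0^{y_a}$ by the strong Markov property and controlling $T_0^{y_a}=\sum_{n=1}^{y_a}\theta_n$ through the finite downward recursion $\psi_n(c)=\mu_n/(\mu_n-c-\lambda_n(\psi_{n+1}(c)-1))$ — is valid and more explicit, exploiting that the chain is skip-free downwards; the one point to make rigorous is that you cannot merely ``solve'' the first-jump identity for $\psi_n(c)$ (if $\psi_n(c)=\infty$ the identity is vacuous): you should derive the formula from the representation of $\theta_n$ as a geometric number of holding times at $n$ interlaced with i.i.d.\ excursions of law $\theta_{n+1}$, which shows that positivity of the denominator is exactly equivalent to $\psi_n(c)<\infty$. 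With that justification, and the monotone coupling for $m\le y_a$, your proof is complete.
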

\begin{proof}
\begin{itemize}
\item[\rm{1)}]
There exists $n_a\in \mathbb Z_+$ large enough so that 
$$\sum_{n=n_a}^\infty \frac 1{\pi _n} \sum_{k\geq n+1} \frac {\pi _k}{\lambda _k} \leq \frac 1{a}.$$
Let $J$ be the nonnegative increasing function defined by
$$J(m) := \sum_{n=n_a}^{m-1} \frac 1{\pi _n} \sum_{k\geq n+1} \frac {\pi _k}{\lambda _k}, \qquad m\geq n_a+1.$$
Set now $y_a=n_a+1$. Note that $\sup_{m>y_a} T_{y_a}^m <\infty $ a.s., then for any $m>y_a$ we have
$$J(X_{t\wedge T_{y_a}^m}^m) -J(m) - \int_0^{t\wedge T_{y_a}^m} AJ(X_s^m) ds$$
is a martingale, where $A$ is the generator of the process $X_t^m$ which is given by
$$Ag(n) = \lambda _n (g(n+1)-g(n)) + \mu _n (g(n-1)-g(n)), \qquad n\geq 1,$$
for any $\mathbb R_+$-valued, bounded function $g$. Therefore, by Ito's formula
$$e^{a (t\wedge T_{y_a}^m)} J(X_{t\wedge T_{y_a}^m}^m) -J(m)- \int_0^{t\wedge T_{y_a}^m} e^{a s}(aJ(X_s^m)+ AJ(X_s^m)) ds$$
is also a martingale. It implies that 
$$ \mathbb E \Big( e^{a (t\wedge T_{y_a}^m)} J(X_{t\wedge T_{y_a}^m}^m)\Big)= J(m)+ \mathbb E \Big( \int_0^{t\wedge T_{y_a}^m} e^{a s}(aJ(X_s^m)+ AJ(X_s^m)) ds \Big).$$
We have for $m>y_a$, $J(X_s^m)<J(\infty )\leq \frac 1{a} \quad \forall s\leq T_{y_a}^m$, and for any $n\geq y_a$,
\begin{align*}
AJ(n)&= \lambda _n (J(n+1)-J(n)) + \mu _n (J(n-1)-J(n))\\
&= \lambda _n \frac 1{\pi _n} \sum_{k\geq n+1} \frac {\pi _k}{\lambda _k} - \mu _n \frac 1{\pi _{n-1}} \sum_{k\geq n} \frac {\pi _k}{\lambda _k} \\
&= \frac {\mu _2 ...\mu _n}{\lambda _1 ...\lambda _{n-1}} \sum_{k\geq n+1} \frac {\pi _k}{\lambda _k}- \frac {\mu _2 ...\mu _n}{\lambda _1 ...\lambda _{n-1}} \sum_{k\geq n} \frac {\pi _k}{\lambda _k} \\
&= -\frac {\mu _2 ...\mu _n}{\lambda _1 ...\lambda _{n-1}} \frac {\pi _n}{\lambda _n} \\
&= -1.
\end{align*}
So that
$$\mathbb E \Big( e^{a (t\wedge T_{y_a}^m)} J(X_{t\wedge T_{y_a}^m}^m)\Big)\leq J(m) \qquad \forall m>y_a.$$
But $J$ is increasing, hence for any $m>y_a$ one gets
$$0<J(y_a)\leq J(m)<J(\infty )\leq \frac 1{a}.$$
From this we deduce that
$$\mathbb E \Big( e^{a (t\wedge T_{y_a}^m)} \Big) \leq \frac 1{a J(y_a)}\qquad \forall m>y_a.$$
Hence
$$\mathbb E \Big( e^{a T_{y_a}^m} \Big) \leq \frac 1{a J(y_a)}\qquad \forall m>y_a,$$
by the monotone convergence theorem. The result follows.
\item[\rm{2)}] Using the first result of the theorem, there exists a constant $M\in \mathbb Z_+$ such that 
\begin{equation*}
\sup_{m>M} \mathbb{E} \big( e^{ T_{M}^m}\big) <\infty ,
\end{equation*}
or $\mathbb{E} \big( e^{ T_{M}}\big) <\infty$, where $T_M := \sup_{m>M} T_M^m$.

Given any fixed $T>0$, let $p$ denote the probability that starting from $M$ at time $t=0, X$ hits zero before time $T$. Clearly $p>0$. Let $\zeta $ be a geometric random variable with success probability $p$, which is defined as follows. Let $X$ start from $M$ at time $0$. If $X$ hits zero before time $T$, then $\zeta =1$. If not, we look the position $X_T$ of $X$ at time $T$.\\
If $X_T>M$, we wait until $X$ goes back to $M$. The time needed is stochastically dominated by the random variable $T_M$, which is the time needed for $X$ to descend to $M$, when starting from $\infty $. If however $X_T\leq M$, we start afresh from there, since the probability to reach zero in less than $T$ is greater than or equal to $p$, for all starting points in the interval $(0, M]$. \\
So either at time $T$, or at time less than $T+T_M $, we start again from a level which is less than or equal to $M$. If zero is reached during the next time interval of length $T$, then $\zeta =2$... Repeating this procedure, we see that $\sup_{m>0}T_0^m$ is stochastically dominated by
\begin{equation*}
\zeta  T+ \sum_{i=1}^{\zeta } \eta _i,
\end{equation*}
where the random variables $\eta _i$ are i.i.d, with the same law as $T_M $, globally independent of $\zeta $. We have
\begin{align*}
\sup_{m>0} \mathbb{E} \big( e^{c T_{0}^m}\big) &\leq \mathbb{E} \big( e^{c (\zeta  T+ \sum_{i=1}^{\zeta  } \eta _i)}\big) \\
&\leq \sqrt{\mathbb{E} \big( e^{2c \zeta  T}\big)} \sqrt{\mathbb{E} \big( e^{2c \sum_{i=1}^{\zeta  } \eta _i }\big)}.
\end{align*}
Since $\zeta  $ is a geometric$(p)$ random variable, then 
\begin{equation*}
\mathbb{E} \big( e^{2c \zeta  T}\big) = \frac p{1-p} \sum_{k=1}^\infty \big( e^{2cT} (1-p) \big)^k <\infty ,
\end{equation*}
provided that $c< -\log(1-p)/ 2T$.\\
Moreover, we have
\begin{align*}
\mathbb{E} \big( e^{2c \sum_{i=1}^{\zeta  } \eta _i }\big) &=\sum_{k=1}^\infty \mathbb{E} \big( e^{2c \sum_{i=1}^{k} \eta _i }\big) \mathbb P (\zeta  =k) \\
&= \sum_{k=1}^\infty \big[ \mathbb{E} \big( e^{2c T_M }\big) \big]^{k} \mathbb P (\zeta  =k) \\
&= \frac p{1-p} \sum_{k=1}^\infty \big[ \mathbb{E} \big( e^{2c T_M }\big)(1-p) \big]^{k}.
\end{align*}
Since $\mathbb{E} \big( e^{ T_{M}}\big) <\infty$, it follows from the monotone convergence theorem that $\mathbb{E} \big( e^{2c T_M }\big) \rightarrow 1$ as $c \rightarrow 0$. Hence we can choose $0<c< -\log (1-p)/2T$ such that 
\begin{equation*}
\mathbb{E} \big( e^{2c T_M }\big)(1-p) <1,
\end{equation*}
in which case $\mathbb{E} \big( e^{2c \sum_{i=1}^{\zeta  } \eta _i }\big)<\infty $.\\
Then $\sup_{m>0} \mathbb{E} \big( e^{c T_{0}^m}\big)<\infty $. The result follows.
\end{itemize}
\end{proof}
\subsection{Height and length of the genealogical forest of trees in the discrete case}
The following result follows from Theorem \ref{th1} and Theorem \ref{th2}
\begin{theorem}\label{th3}
Suppose that $f$ is a function satisfying (H1) and there exists $a_0>0$ such that $f(x)\not= 0$ for all $x\geq a_0$. We have
\begin{itemize}
\item[\rm{1)}] If $\int_{a_0}^\infty \frac {1}{\mid f(x)\mid } dx =\infty$, then
 $$\sup_{m>0} H^m = \infty \quad a.s.$$
\item[\rm{2)}] If $\int_{a_0}^\infty \frac {1}{\mid f(x)\mid } dx <\infty$, then
$$\sup_{m>0} H^m < \infty \quad a.s.,$$
and moreover, there exists some positive constant $c$ such that 
 \begin{equation*}
\sup_{m>0} \mathbb E \big(e^{c H^m }\big) < \infty .
\end{equation*}
\end{itemize}
\end{theorem}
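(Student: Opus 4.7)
The plan is to observe that Theorem \ref{th3} is essentially a restatement of Theorems \ref{th1} and \ref{th2} once one recognises that, for each fixed $m$, the height $H^m = \inf\{t>0 : X^m_t = 0\}$ is exactly the first hitting time $T_0^m$ of $0$ by the birth--death process $X^m$ with rates $\lambda_n = \lambda n + F^+(n)$ and $\mu_n = \mu n + F^-(n)$, which are precisely the rates to which the preceding two theorems apply. So no new analytic work is required; the only thing to add is a short argument to turn a supremum outside the expectation into a supremum inside it.

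First I would recall that the coupling constructed in section \ref{sec2} makes $m \mapsto X^m_t$ a.s. non--decreasing for every $t \geq 0$, and hence $m \mapsto H^m$ is a.s. non--decreasing as well. Therefore $\sup_{m > 0} H^m = \lim_{m \to \infty} H^m$ a.s. Part 1) then follows immediately: since $H^m = T_0^m$, Theorem \ref{th1}(1) yields $\sup_{m>0} H^m = \sup_{m>0} T_0^m = \infty$ a.s.

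For part 2), Theorem \ref{th2}(2) supplies some $c > 0$ with $\sup_{m>0} \mathbb{E}(e^{cT_0^m}) = \sup_{m>0} \mathbb{E}(e^{cH^m}) < \infty$. Since $m \mapsto e^{cH^m}$ is a.s. non--decreasing, the monotone convergence theorem gives
$$\mathbb{E}\bigl(e^{c \sup_{m>0} H^m}\bigr) = \lim_{m \to \infty} \mathbb{E}(e^{cH^m}) = \sup_{m>0} \mathbb{E}(e^{cH^m}) < \infty,$$
which yields simultaneously $\sup_{m>0} H^m < \infty$ a.s. and the announced finite exponential moment.

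There is essentially no obstacle: the technical work was already done in Proposition \ref{prop2.4} and Theorems \ref{th1}, \ref{th2}. The only subtlety worth flagging is that the statement $\sup_m \mathbb{E}(\cdot)<\infty$ coming from Theorem \ref{th2} does not on its own force $\mathbb{E}(\sup_m\cdot)<\infty$; it is the monotone coupling from section \ref{sec2} that allows these two to be identified via monotone convergence. This monotonicity is the one genuinely probabilistic (as opposed to analytic) ingredient used in passing from Theorems \ref{th1}, \ref{th2} to Theorem \ref{th3}.
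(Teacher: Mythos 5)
Your proposal is correct and matches the paper's approach exactly: the paper gives no separate proof, stating only that Theorem \ref{th3} follows from Theorems \ref{th1} and \ref{th2}, which is precisely the identification $H^m=T_0^m$ you make. Your explicit use of the monotone coupling plus monotone convergence to pass from $\sup_m\mathbb E(e^{cH^m})<\infty$ to $\sup_m H^m<\infty$ a.s. is the right way to fill in the details the paper leaves implicit.
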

Concerning the length of the genealogical tree we have
\begin{theorem}\label{th4}
Suppose that the function $\frac{f(x)}{x}$ satisfies (H1) and there exists $a_0>0$ such that $f(x)\not= 0$ for all $x\geq a_0$. We have
\begin{itemize}
\item[\rm{1)}] If $\int_{a_0}^\infty \frac {x}{\mid f(x)\mid } dx =\infty$, then
 $$\sup_{m>0} L^m = \infty \quad a.s.$$
\item[\rm{2)}] If $\int_{a_0}^\infty \frac {x}{\mid f(x)\mid } dx <\infty$, then
$$\sup_{m>0} L^m < \infty \quad a.s.,$$
and moreover, there exists some positive constant $c$ such that 
 \begin{equation*}
\sup_{m>0} \mathbb E \big(e^{c L^m }\big) < \infty .
\end{equation*}
\end{itemize}
\end{theorem}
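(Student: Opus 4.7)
My plan is to reduce $L^m$ to the first hitting time of $0$ by a suitably time-changed birth and death process $Y$, and then apply Propositions \ref{prop2.1}--\ref{prop2.4} together with the exponential-moment argument of Theorem \ref{th2} to $Y$. Let $\sigma_s := \inf\{t > 0 : \int_0^t X^m_u\,du \ge s\}$ be the right-continuous inverse of the additive functional $t \mapsto \int_0^t X^m_u\,du$, and set $Y_s := X^m_{\sigma_s}$. Since $d\sigma/ds = 1/Y_s$ and $X^m$ has upward/downward jump rates $\lambda_k,\mu_k$ at state $k$, a direct computation shows that $Y$ is a $\mathbb Z_+$-valued birth and death process with rates
\[
\tilde\lambda_k \;=\; \lambda_k/k \;=\; \lambda + F^{+}(k)/k, \qquad \tilde\mu_k \;=\; \mu_k/k \;=\; \mu + F^{-}(k)/k,
\]
and the key property that $L^m = \tilde T^m_0$, the first hitting time of $0$ by $Y$ starting from $Y_0 = m$.

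The main technical step is to prove the analog of Proposition \ref{prop2.4} for $Y$: under the hypothesis that $f(x)/x$ satisfies (H1), $\infty$ is an entrance boundary for $Y$ iff $\int_{a_0}^\infty x/|f(x)|\,dx < \infty$. Two observations simplify the argument. First, $\tilde\lambda_k/\tilde\mu_k = \lambda_k/\mu_k$, so the coefficients of Proposition \ref{prop2.1} satisfy $\tilde\pi_n = \pi_n$, whence $\tilde A = A$. Second, $\tilde\pi_k/\tilde\lambda_k = k\,\pi_k/\lambda_k$, so
\[
\tilde S \;=\; \sum_{n \ge 1}\frac{1}{\pi_n}\sum_{k \ge n+1}\frac{k\,\pi_k}{\lambda_k},
\]
which differs from $S$ only by an extra factor $k$ inside the inner sum. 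For the ``$\Leftarrow$'' direction, $\int x/|f|\,dx < \infty$ implies $\int 1/|f|\,dx < \infty$, hence $\tilde A = A = \infty$ by Proposition \ref{prop2.4}; repeating the AM--GM/reindexing computation of Proposition \ref{prop2.4} verbatim, with this extra $k$ cancelling the explicit $1/k$ there and summing a geometric series in $a_i = \lambda_i/\mu_i$, I obtain
\[
\tilde S \;\le\; \frac{\lambda+\theta}{\lambda}\sum_{i \ge n_0 + 1}\frac{1}{(\mu-\lambda) - f(i)/i}.
\]
The right-hand side is finite because $g(x) := f(x)/x$ satisfies (H1) and $\int 1/|g(x)|\,dx = \int x/|f(x)|\,dx < \infty$: Lemma \ref{le2.2} applied to $g$ forces $g(i) \to -\infty$, while the monotonicity of $\theta x - g(x)$ built into (H1) converts $\int 1/|g|\,dx < \infty$ into $\sum 1/|g(i)| < \infty$ by a standard integral--sum comparison. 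For the ``$\Rightarrow$'' direction I split cases: if $\int 1/|f|\,dx = \infty$, then $S = \infty$ by Proposition \ref{prop2.4}, and since $k \ge 1$ gives $\tilde S \ge S$, we get $\tilde S = \infty$; if $\int 1/|f|\,dx < \infty$ but $\int x/|f|\,dx = \infty$, then Lemma \ref{le2.2} applied to $f$ gives $f(i)/i \to -\infty$, and the lower bound $\tilde S \ge \sum_{n \ge 2} n/\mu_n$ combined with Lemma \ref{le2.3} and the same integral--sum comparison for $g$ yields $\tilde S = \infty$.

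Once this entrance-boundary characterization of $\tilde T^m_0 = L^m$ is in place, Part 1 follows exactly as in the proof of Theorem \ref{th1}(1), using that $m \mapsto L^m$ is a.s.\ increasing, while the a.s.\ finiteness in Part 2 follows from $\mathbb E(\sup_m L^m) < \infty$, which is Proposition \ref{prop2.1}(3) applied to $Y$. The exponential moment bound is obtained by copying the proof of Theorem \ref{th2} essentially word for word, with $(\pi_n, \lambda_k, A)$ replaced throughout by $(\tilde\pi_n, \tilde\lambda_k, \tilde A)$: the algebraic identity $\tilde A \tilde J \equiv -1$ on $\{n \ge y_a\}$ with $\tilde J(m) := \sum_{n = n_a}^{m-1} \tilde\pi_n^{-1} \sum_{k \ge n+1} \tilde\pi_k/\tilde\lambda_k$ is the same telescoping cancellation, and the geometric-tail stochastic domination carries over with $L^m$ in place of $T^m_0$. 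The step I expect to be the main obstacle is the bookkeeping for the $(\Leftarrow)$ bound on $\tilde S$: one must track carefully how the extra factor $k$ enters the AM--GM reindexing of Proposition \ref{prop2.4} and then verify that the integral--sum comparisons for $g = f/x$ (both $\int < \infty \Rightarrow \sum < \infty$ and its converse) are legitimate under the (H1) assumption on $g$, which is exactly what justifies passing from $\int x/|f(x)|\,dx$ to summability of $1/((\mu-\lambda) - f(i)/i)$.
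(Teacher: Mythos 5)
Your route is genuinely different from the paper's. The paper never time-changes $X^m$ itself: it first sandwiches $X^m$ between two auxiliary birth and death processes via Bhaskaran's comparison result (Proposition \ref{prop2.7}), choosing their rates so that \emph{after} the time change the resulting processes have rates of the canonical form $\lambda' n+F_i^{+}(n)$, $\mu' n+F_i^{-}(n)$ for interaction functions built from $f(x)/x$ (namely $f_1(n)=f(n)/n$ for the lower bound and $f_2(x)=\frac{\varepsilon}{2}(\frac{f(x)}{x}-\theta x)$ for the upper bound), so that Theorem \ref{th1}, Remark \ref{re2.5} and Theorem \ref{th2} apply as black boxes. You instead time-change $X^m$ directly and redo the entrance-boundary analysis for the resulting process $Y$ with rates $\lambda+F^{+}(k)/k$, $\mu+F^{-}(k)/k$; these are \emph{not} of the canonical form (the linear term is absent), which is exactly why you cannot quote Proposition \ref{prop2.4} and must recompute $\tilde S$ with the extra factor $k$. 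What you buy is the elimination of Proposition \ref{prop2.7} and of the auxiliary processes; what you pay is having to re-run the AM--GM computation and the proof of Theorem \ref{th2} with tildes. Your identities $\tilde\pi_n=\pi_n$, $\tilde A=A$, $\tilde S=\sum_n\pi_n^{-1}\sum_{k\ge n+1}k\pi_k/\lambda_k$ and $L^m=\tilde T_0^m$ are all correct, and Part 1 and the exponential-moment transfer go through as you describe.

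There is, however, one concrete gap, and it sits precisely where you predicted. After the AM--GM/reindexing step you arrive at $\tilde S\le C+\frac1\lambda\sum_i\frac{\lambda_i}{\mu_i-\lambda_i}=C+\frac1\lambda\sum_i\frac{\lambda i+F^{+}(i)}{(\mu-\lambda)i-f(i)}$, and your passage to $\frac{\lambda+\theta}{\lambda}\sum_i\frac{1}{(\mu-\lambda)-f(i)/i}$ silently uses $F^{+}(i)\le\theta i$, which is Lemma \ref{le2.3} and requires $f$ \emph{itself} to satisfy (H1). The hypothesis of Theorem \ref{th4} only gives (H1) for $g(x):=f(x)/x$, and then Lemma \ref{le2.6} yields merely $F^{+}(i)\le 2\theta i^2$; inserting that bound produces a series comparable to $\sum_i i^2/|f(i)|$, which is \emph{not} controlled by $\int x/|f(x)|\,dx<\infty$ (compare $f(x)=-x^2(\log x)^2$). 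The argument can be repaired: in the regime $\int x/|f|\,dx<\infty$, Lemma \ref{le2.2} applied to $g$ gives $g(k)/k\to-\infty$, and then
\begin{equation*}
f(k)-f(k-1)=(k-1)\bigl(g(k)-g(k-1)\bigr)+g(k)\le (k-1)\theta+g(k)\le 0
\end{equation*}
for all $k$ large enough, so $F^{+}$ is eventually \emph{constant}, $\lambda_i\le(\lambda+C')i$, and your bound holds with $\frac{\lambda+C'}{\lambda}$ in place of $\frac{\lambda+\theta}{\lambda}$. You need to add this observation (or an equivalent one) explicitly; without it the key estimate does not follow from the stated hypotheses. The remaining integral--sum comparisons for $g$ that you invoke are legitimate, using the monotonicity of $\theta x-g(x)$ exactly as you say.
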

To prove Theorem \ref{th4} we need the following result, which is Theorem 1 in Bhaskaran \cite{BBG}.
\begin{proposition}\label{prop2.7}
Let $Y^i$ be a birth and death process with birth rates $\{\lambda _n^{(i)}\}_{n\geq 1}$ and death rates $\{\mu _n^{(i)}\}_{n\geq 1}$($i = 1,2$), where $\lambda _n^{(i)}$ and $\mu _n^{(i)}$ satisfy the condition
\begin{equation}\label{eq2.1}
\sum_{n\geq 1} \frac 1{\pi _n} \sum_{k=1}^n \frac{\pi _k}{\lambda _k}=\infty .
\end{equation}
Suppose that
\begin{equation*}
\lambda _n^{(1)}\geq \lambda _n^{(2)}\quad \text{and}\quad  \mu _n^{(1)} \leq \mu _n^{(2)},\qquad  n\geq 1.
\end{equation*}
Then one can construct two processes $\tilde{Y^1}$ and $\tilde{Y^2}$ on the same probability space such that $\{\tilde{Y^i}(k), k\geq 0\}$ and $\{Y^i(k), k\geq 0\}$ have the same law for $i=1,2$, and $\tilde{Y^1}(k)\geq \tilde{Y^2}(k)$ a.s. for all $k\geq 0$.
\end{proposition}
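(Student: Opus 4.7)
The plan is to construct the pair $(\tilde Y^1,\tilde Y^2)$ as a Markov chain on the ordered lattice $\{(a,b)\in\mathbb Z_+^2 : a\geq b\}$, via a graphical construction with shared Poisson clocks chosen so that every transition preserves $\tilde Y^1\geq \tilde Y^2$ while leaving the individual marginal laws intact. Start from any initial pair with $\tilde Y^1_0\geq \tilde Y^2_0$ (matching the prescribed marginal initial laws).

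The coupled dynamics I would prescribe are the following. From a diagonal state $(n,n)$, I would allow four kinds of jump: the pair moves to $(n+1,n+1)$ at the shared birth rate $\lambda_n^{(2)}$, to $(n+1,n)$ at the extra rate $\lambda_n^{(1)}-\lambda_n^{(2)}\geq 0$ accessible only to $\tilde Y^1$, to $(n-1,n-1)$ at the shared death rate $\mu_n^{(1)}$, and to $(n,n-1)$ at the extra rate $\mu_n^{(2)}-\mu_n^{(1)}\geq 0$ imposed only on $\tilde Y^2$. From an interior state $(a,b)$ with $a>b$ the two coordinates should evolve as independent birth-death processes with their own rates $(\lambda_a^{(1)},\mu_a^{(1)})$ and $(\lambda_b^{(2)},\mu_b^{(2)})$. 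Since two independent Poisson clocks almost surely never fire simultaneously, a single interior transition changes only one coordinate, and the pair can land on but never cross the diagonal.

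To verify that the marginals have the prescribed laws, I would sum the coupled rates at which the first coordinate leaves state $n$: on the diagonal this gives total up-rate $\lambda_n^{(2)}+(\lambda_n^{(1)}-\lambda_n^{(2)})=\lambda_n^{(1)}$ and down-rate $\mu_n^{(1)}$, while in the interior the rates are already $\lambda_n^{(1)}$ and $\mu_n^{(1)}$; the analogous count for $\tilde Y^2$ gives up-rate $\lambda_n^{(2)}$ and down-rate $\mu_n^{(2)}$. Since these rates depend only on the coordinate in question, each marginal is Markov with the desired generator. Monotonicity is transparent from the case analysis: on the diagonal every enumerated transition keeps $\tilde Y^1\geq \tilde Y^2$, and in the interior the no-simultaneous-jumps property of independent Poisson clocks rules out order-reversing moves. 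Hence the a.s.\ ordering is maintained at every jump time and therefore at every $t\geq 0$.

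The main obstacle is to prove that the coupled chain is well-defined for all $t\geq 0$, i.e.\ non-explosive, so that only finitely many jumps accumulate in finite time and the pointwise ordering at jump times truly extends to all times. This is exactly where assumption (\ref{eq2.1}) enters: it is the Reuter/Anderson regularity criterion guaranteeing that a birth-death chain with rates $(\lambda_n,\mu_n)$ and ratios $\pi_n$ does not escape to $+\infty$ in finite time. Applying the criterion separately to the dominating rates $(\lambda_n^{(1)},\mu_n^{(1)})$ and $(\lambda_n^{(2)},\mu_n^{(2)})$ shows that neither marginal explodes; because the total jump rate of the coupled chain at $(a,b)$ is bounded by the sum of the two marginal jump rates, a standard comparison argument precludes explosion of the pair as well. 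Combining this global existence with the preceding paragraph yields the desired pathwise almost sure ordering $\tilde Y^1_t\geq \tilde Y^2_t$ for all $t\geq 0$ together with the correct marginal laws, completing the proof.
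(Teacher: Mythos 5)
The paper does not actually prove Proposition \ref{prop2.7}: it is quoted as Theorem 1 of Bhaskaran \cite{BBG} and used as a black box, so there is no internal argument to compare yours against. Your self-contained proof by a monotone Markov coupling is correct and is probably the most transparent route to the result. The generator you prescribe on the ordered cone $\{(a,b):a\ge b\}$ has nonnegative rates precisely because $\lambda_n^{(1)}\ge\lambda_n^{(2)}$ and $\mu_n^{(1)}\le\mu_n^{(2)}$, every allowed transition stays in the cone, and the Dynkin lumping criterion (the jump rates of each coordinate out of any joint state depend only on that coordinate's value) identifies the two marginal laws; this is exactly where the hypothesis enters, and your case analysis on and off the diagonal is complete. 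Two points deserve a more careful write-up. First, the non-explosion step should be argued by counting jumps rather than by comparing rates: run the minimal coupled chain up to its explosion time $\zeta$; every jump of the pair moves at least one coordinate; each coordinate, observed up to $\zeta$, is a minimal birth-and-death chain whose rates satisfy \eqref{eq2.1}, which is the Reuter non-explosion criterion (cf.\ Remark \ref{re2.8}), hence it makes a.s.\ finitely many jumps in any bounded interval, and therefore $\P(\zeta<\infty)=0$. Merely observing that the total jump rate at the current state is bounded by the sum of the two marginal rates does not by itself preclude explosion. Second, you implicitly assume the initial conditions can be coupled in an ordered way; in all applications in the paper both processes start from the same deterministic point, so this is harmless, but it should be stated as a hypothesis of the construction. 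As a bonus, your argument goes through verbatim when $\lambda_n^{(2)}=0$, which is the degenerate case invoked in Remark \ref{re2.8}.
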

\begin{remark}\label{re2.8}
\begin{itemize}
\item[\rm{1)}] Condition \eqref{eq2.1} implies that the birth and death process does not explode in finite time a.s.Note that 
\begin{align*}
\sum_{n\geq 1} \frac 1{\pi _n} \sum_{k=1}^n \frac{\pi _k}{\lambda _k}&\geq \sum_{n\geq 1} \frac 1{\pi _n}\times \frac{\pi _n}{\lambda _n}\\
&= \sum_{n\geq 1} \frac 1{\lambda  _n}.
\end{align*}
Then \eqref{eq2.1} is satisfied if there exists a constant $\gamma  > 0$ such that
$$\lambda _n\leq \gamma n,\qquad \forall n\geq 1.$$
\item[\rm{2)}] Proposition \ref{prop2.7} is still true when $\lambda _n^2=0, n\geq 1$. In fact, the proof of Bhaskaran (as given in \cite{BBG}) still works in this case.
\end{itemize}
\end{remark}
Now we will apply Proposition \ref{prop2.7} to prove Theorem \ref{th4}. In the proof, we will not bother to check condition \eqref{eq2.1}, which is obviously satisfied here.\\
\\
{\bf Proof of Theorem \ref{th4}} 
\begin{itemize}
\item[\rm{1)}] Let
$$f_1(n):=\frac{f(n)}{n},\qquad F_1^{-}(n):= \sum_{k=1}^{n} (f_1(k)-f_1(k-1))^{-}, \qquad n\geq 1.$$
By Lemma \ref{le2.6} below we have for all $n\geq 1,$
\begin{align*}
\mu _n= \mu n+ F^{-}(n)&\leq \mu n+2\theta n^2 - f(n)\\
&\leq (\mu +2\theta )n^2 -\frac{f(n)}{n} n\\
&\leq (\mu +2\theta )n^2 + F_1^{-}(n)n.
\end{align*}
Let $X^{1,m}$ be a birth and death process which starts from $X^{1,m}_0=m$, with birth rate $\lambda^1 _n=0$ and death rate 
$\mu^1 _n=(\mu +2\theta )n^2 + F_1^{-}(n)n$ when in state $n, n\geq 1$. From Proposition \ref{prop2.7} we deduce that for all $m\geq 1,$
\begin{equation*}
X^m\geq X^{1,m} (\text{in dist.}), \quad H^m\geq H^{1,m}(\text{in dist.}), \quad L^m\geq L^{1,m} (\text{in dist.}),
\end{equation*}
and moreover, since both $m\to L^m$ and $m\to L^{1,m}$ are a.s. increasing,
$$\sup_{m>0} L^m\geq \sup_{m>0} L^{1,m} (\text{in dist.}),$$
where $H^{1,m}, L^{1,m}$ are the height and the length of the genealogical tree of the population $X^{1,m}$, respectively.

We now use a random time-change to transform the length of a forest of genealogical trees into the height of another forest of genealogical trees, so that we can apply Theorem \ref{th1}. We define
\begin{equation*}
A_t^{1,m}:= \int_0^t X_r^{1,m} dr, \qquad \eta _t^{1,m} = \inf\{s>0, A_s^{1,m}>t\},
\end{equation*}
and consider the process $U^{1,m}:= X^{1,m}\circ \eta ^{1,m}$. Let $S^{1,m}$ be the stopping time defined by
\begin{equation*}
S^{1,m}= \inf\{r>0, U_r^{1,m}=0\},
\end{equation*}
then we have 
$$S^{1,m}= \int_0^{H^{1,m}} X_r^{1,m} dr=L^{1,m} \qquad a.s.$$
The process $X^{1,m}$ can be expressed using a standard Poisson processes $P$, as
\begin{equation*}
X_t^{1,m} = m- P\Bigg(\int_0^t [(\mu+2\theta ) (X_r^{1,m})^2+ F_1^{-}(X_r^{1,m})X_r^{1,m}] dr\Bigg).
\end{equation*}
Consequently the process $U^{1,m}$ satisfies
\begin{equation*}
U_t^{1,m} = m- P\Bigg(\int_0^t [(\mu+2\theta ) U_r^{1,m}+ F_1^{-}(U_r^{1,m})] dr\Bigg).
\end{equation*}
Applying Theorem \ref{th1} and Remark \ref{re2.5} we have
$$\sup_{m>0} L^{1,m}= \sup_{m>0} S^{1,m}=\infty \qquad a.s.,$$
hence $\sup_{m>0} L^m=\infty $ a.s. The result follows.
\item[\rm{2)}] For the second part of the theorem, we note that in the case $\int_{a_0}^\infty \frac {x}{\mid f(x)\mid } dx <\infty$, we have $\frac{f(x)}{x^2}\rightarrow -\infty $ as $x\rightarrow \infty $, by Lemma \ref{le2.2}. Then there exists a constant $u>0$ such that for all $n\geq u$ (using again Lemma \ref{le2.6}),
$$\mu n +F^{-}(n)\geq -f(n)\geq \theta n^2- \frac{f(n)}{2}.$$
We can choose $\varepsilon \in (0,1)$ such that for all $1\leq n\leq u$
$$\mu n \geq \varepsilon (\theta n^2- \frac{f(n)}{2}).$$
It implies that for all $n\geq 1$,
$$\mu n +F^{-}(n)\geq \varepsilon (\theta n^2- \frac{f(n)}{2}).$$
Let $X^{2,m}$ be a birth and death process which starts from $X^{2,m}_0=m$, with birth rate $\lambda^2 _n=(\lambda +2\theta )n^2$ and death rate $\mu^2 _n=\varepsilon (\theta n^2- \frac{f(n)}{2})$ when in state $n, n\geq 1$. From Lemma \ref{le2.6} and Proposition \ref{prop2.7} we deduce that for all $m\geq 1,$
\begin{equation*}
X^m\leq  X^{2,m} (\text{in dist.}), \quad H^m\leq  H^{2,m}(\text{in dist.}), \quad L^m\leq  L^{2,m} (\text{in dist.}),
\end{equation*}
where $H^{2,m}, L^{2,m}$ are the height and the length of the genealogical tree of the population $X^{2,m}$, respectively. We define
\begin{equation*}
A_t^{2,m}:= \int_0^t X_r^{2,m} dr, \qquad \eta _t^{2,m} = \inf\{s>0, A_s^{2,m}>t\},
\end{equation*}
and consider the process $U^{2,m}:= X^{2,m}\circ \eta ^{2,m}$. Let $S^{2,m}$ be the stopping time defined by
\begin{equation*}
S^{2,m}= \inf\{r>0, U_r^{2,m}=0\},
\end{equation*}
then we have 
$$S^{2,m}= \int_0^{H^{2,m}} X_r^{2,m} dr=L^{2,m} \qquad a.s.$$
Denote $f_2(x):=\frac{\varepsilon }2 (\frac{f(x)}{x}-\theta x)$, then $f_2$ is a negative and decreasing function, so that for all $n\geq 1,$
$$F_2^{+}(n):= \sum_{k=1}^{n} (f_2(k)-f_2(k-1))^{+}=0, \qquad F_2^{-}(n):= \sum_{k=1}^{n} (f_2(k)-f_2(k-1))^{-}=-f_2(n).$$ 
The process $X^{2,m}$ can be expressed using two mutually independent standard Poisson processes $P_1$ and $P_2$, as
\begin{equation*}
X_t^{2,m} = m+ P_1\Bigg(\int_0^t [(\lambda +2\theta )(X_r^{2,m})^2]dr \Bigg) - P_2\Bigg(\int_0^t [\frac{\varepsilon \theta }2 (X_r^{2,m})^2+ F_2^{-}(X_r^{2,m})X_r^{2,m}] dr\Bigg).
\end{equation*}
Consequently the process $U^{2,m}$ satisfies
\begin{equation*}
U_t^{2,m} = m+ P_1\Bigg(\int_0^t [(\lambda +2\theta )U_r^{2,m} +F_2^{+}(U_r^{2,m})]dr \Bigg) - P_2\Bigg(\int_0^t [\frac{\varepsilon \theta }2 U_r^{2,m}+ F_2^{-}(U_r^{2,m})] dr\Bigg).
\end{equation*}
By Theorem \ref{th2}, there exists some positive constant $c$ such that
\begin{equation*}
\sup_{m>0} \mathbb E \big(e^{c L^{2,m} }\big) = \sup_{m>0} \mathbb E \big(e^{c S^{2,m} }\big)< \infty ,
\end{equation*}
hence $$\sup_{m>0} \mathbb E \big(e^{c L^{m} }\big)\leq \sup_{m>0} \mathbb E \big(e^{c L^{2,m} }\big)<\infty .$$ The result follows.
$$\qquad \qquad \qquad \qquad \qquad \qquad \qquad \qquad \qquad \qquad \qquad \qquad \qquad \qquad \qquad \qquad \qquad \qquad \square$$
\end{itemize}
$\quad$ It remains to prove
\begin{lemma}\label{le2.6}
Suppose that the function $\frac{f(x)}{x}$ satisfies (H1). For all $n\geq 1$ we have the following inequalities
\begin{align*}
F^{+}(n)&\leq 2\theta n^2,\\
-f(n)\leq F^{-}(n)&\leq 2\theta n^2-f(n).
\end{align*}
\end{lemma}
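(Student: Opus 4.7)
The plan is to reduce both inequalities to a single upper bound on $F^{+}(n)$, via the elementary telescoping identity
\[
F^{+}(n) - F^{-}(n) \;=\; \sum_{\ell=1}^{n}\bigl(f(\ell)-f(\ell-1)\bigr) \;=\; f(n),
\]
which uses only $f(0)=0$. Combined with the trivial $F^{+}(n)\ge 0$ this already yields the lower bound $F^{-}(n)=F^{+}(n)-f(n)\ge -f(n)$ for free, while the claimed upper bound $F^{-}(n)\le 2\theta n^{2}-f(n)$ is just the upper bound for $F^{+}(n)$ shifted by $-f(n)$. So the whole lemma reduces to proving $F^{+}(n)\le 2\theta n^{2}$.

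For that bound, I set $g(x):=f(x)/x$ for $x>0$ and $g(0):=0$, so that $f(\ell)=\ell\, g(\ell)$ and one has the algebraic identity
\[
f(\ell)-f(\ell-1)\;=\;\ell\bigl(g(\ell)-g(\ell-1)\bigr)+g(\ell-1).
\]
The hypothesis that $g$ satisfies (H1) is tailor-made to bound each of the two summands. Applying $g(x+y)-g(y)\le \theta x$ with $x=1$, $y=\ell-1$ gives $g(\ell)-g(\ell-1)\le \theta$; applying it with $y=0$ (and using $g(0)=0$) gives $g(\ell-1)\le \theta(\ell-1)$. Plugging both into the identity produces
\[
f(\ell)-f(\ell-1)\;\le\;\theta\ell+\theta(\ell-1)\;\le\;2\theta\ell,
\]
hence $(f(\ell)-f(\ell-1))^{+}\le 2\theta\ell$. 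Summing from $\ell=1$ to $n$ gives $F^{+}(n)\le \theta n(n+1)\le 2\theta n^{2}$, as desired.

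There is no genuine obstacle here; the only point worth noticing is that the pointwise hypothesis on $g=f(\cdot)/\cdot$ enters twice in essentially different ways, once with step size $1$ to control the increment of $g$, and once anchored at the origin to control the value of $g$ itself. The factor $2$ in the bound is what makes both applications fit together without loss.
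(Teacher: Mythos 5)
Your proof is correct and follows essentially the same route as the paper: the discrete product rule for $f(\ell)=\ell\,g(\ell)$ with $g=f(\cdot)/\cdot$, two applications of (H1) for $g$ (one for the increment, one anchored at $0$), and the telescoping identity $F^{+}(n)-F^{-}(n)=f(n)$ to transfer the bound to $F^{-}$. The only cosmetic difference is that you write the decomposition as $\ell\bigl(g(\ell)-g(\ell-1)\bigr)+g(\ell-1)$ whereas the paper uses $(\ell-1)\bigl(g(\ell)-g(\ell-1)\bigr)+g(\ell)$, which changes nothing.
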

\begin{proof}
Note that for all $k\geq 1$,
\begin{align*}
(f(k)-f(k-1))^{+}&=\Big( (k-1)(\frac{f(k)}{k}-\frac{f(k-1)}{k-1})+\frac{f(k)}{k}\Big)^{+}\\
&\leq (k-1)\Big(\frac{f(k)}{k}-\frac{f(k-1)}{k-1}\Big)^{+} + \Big(\frac{f(k)}{k}\Big)^{+}\\
&\leq 2\theta k.
\end{align*}
Then $$F^{+}(n)\leq \sum_{k=1}^n 2\theta k = \theta n(n+1)\leq 2\theta n^2.$$
The second result now follows from the fact that for all $n\geq 1$
\begin{align*}
 (f(n)-f(n-1))^{-}&\geq f(n-1)-f(n)\\
F^{-}(n)-F^{+}(n)&=-f(n).
\end{align*}
\end{proof}
\section{The continuous case}\label{sec3}
\subsection{Preliminaries}	
We now consider the $\mathbb{R}_{+}$--valued two--parameter stochastic process $\{Z_t^x, t\geq 0, x \geq  0\}$ which solves the SDE \eqref{DLeq}, where the function $f$ satisfies (H1). We note that this coupling of the  $\{Z^x_t,\ t\ge0\}$'s for various $x$'s is consistent with that used in the discrete population case in the sense that as $N\to\infty$,
$$
\{N^{-1}X^{\lfloor Nx\rfloor}_t,\ t\ge0, x>0\}\Rightarrow\{Z^x_t,\ t\ge0, x>0\},$$
see \cite{BP2}, where the topology for which this is valid is made precise.

According again to \cite{BP2}, the process $\{Z_{.}^x, x\geq 0\}$ is a Markov process with values in $C(\mathbb R_+ , \mathbb R_+)$, the space of continuous functions from $\mathbb R_+$ into $\mathbb R_+$, starting from $0$ at $x = 0$. Moreover, we have that whenever $0<x\leq y, Z_t^y\geq Z_t^x$ for all $t\geq 0$ a.s. For $x>0$, define $T^x$ the extinction time of the process $Z^x$ (it is also called the height of the process $Z^x$) by
\begin{align*}
T^x &= \inf\{t>0, Z_t^x=0\}.
\end{align*}
And define $S^x$ the total mass of $Z^x$ by
\begin{align*}
S^x &= \int_0^{T^x} Z_t^x dt.
\end{align*}
We next study the limits of $T^x$ and $S^x$ as $x\rightarrow \infty $. We want to show that under a
specific assumption $T^x\rightarrow \infty $ (resp. $S^x \rightarrow \infty $) as $x\to\infty$, and under the complementary assumption
$\sup_{x>0} \mathbb{E} ( e^{c T^x}) <\infty $ for some $c > 0$ (resp. $\sup_{x>0} \mathbb{E} ( e^{c S^x}) <\infty$ for some $c > 0$). Because both mappings $x \mapsto  T^x$ and $x \mapsto  S^x$ are a.s. increasing, the result will follow for the same result proved for
any collection of r.v.'s $\{T^x, x > 0\}$ (resp. $\{S^x, x > 0\}$) which has the same monotonicity property, and has the same marginal laws as the original one. More precisely, we will consider the $Z^x$'s solutions of (1.2) instead of (1.1), with the same $W$ for all $x > 0$. 

We first need to recall some preliminary results on a class of one--dimensional Kolmogorov diffusions (drifted Brownian motions), which can also be found in \cite{Cat}.\\

Consider a one--dimensional drifted Brownian motion with values in $[0,\infty )$ which is killed when it first hits zero 
\begin{equation*}
d X_t = q(X_t) dt + dB_t, \qquad X_0=x>0,
\end{equation*}
where $q$ is defined and is $C^1$ on $(0,\infty )$, and $\{B_t, t\geq 0\}$ is a standard one- dimensional Brownian motion. In particular, $q$ is allowed to explode at the origin. In this section, we shall assume that \\

\textbf{Hypothesis (H2):} There exists $x_0>0$ such that $q(x)<0 \quad \forall x\geq x_0$, and
\begin{equation*}
\limsup_{x\rightarrow 0^+} q(x)<\infty.
\end{equation*}
 The condition (H2) implies that $q$ is bounded from above by some constant. It ensures that $\infty $ is inaccessible, in the sense that a.s. $\infty $ can not be reached in finite time from $X_0 = x \in (0,\infty )$.

We denote by $T_y^x$ the first time the process $X$ hits $y\in [0,\infty )$ when starting from $X_0=x$
\begin{equation*}
T_y^x= \inf\{t>0: X_t=y \mid X_0 = x\}.
\end{equation*}
We say that $\infty $ is an entrance boundary for $X$ (see, for instance, Revuz and Yor \cite{RY}, page $305$) if there is $y>0$ and a time $t>0$ such that 
\begin{equation*}
\lim_{x\uparrow \infty }\mathbb P (T_y^x<t) >0.
\end{equation*}
Let us introduce the following condition \\

\textbf{Hypothesis (H3):}
\begin{equation*}
\int_1^\infty e^{-Q(y)} \int_y^\infty  e^{Q(z)} dz dy <\infty ,
\end{equation*}
where $Q(y) = 2\int_1^y q(x) dx, y\geq 1$.

Tonelli's theorem ensures that (H3) is equivalent to
\begin{equation*}
\int_1^\infty e ^{Q(y)} \int_1^y e ^{-Q(z)} dz dy <\infty .
\end{equation*}
We have the following result which is Proposition $7.6$ in \cite{Cat}.
\begin{proposition}\label{pro3.1}
The following are equivalent:
\begin{itemize}
\item[\rm{1)}] $\infty $ is an entrance boundary for $X$.
\item[\rm{2)}] $(H3)$ holds.
\item[\rm{3)}] For any $a>0$, there exists $y_a>0$ such that
\begin{equation*}
\sup_{x>y_a} \mathbb{E} \big( e^{a T_{y_a}^x}\big) <\infty .
\end{equation*}
\end{itemize}
\end{proposition}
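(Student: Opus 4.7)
The plan is to mirror the discrete arguments behind Proposition \ref{prop2.1} and Theorem \ref{th2}, relying on Feller's scale/speed analysis of one-dimensional diffusions. For $x > y$, set $u(x) := \mathbb{E}_x(T_y^x)$. By the strong Markov property and Dynkin's formula, $u$ solves the Dirichlet problem $\tfrac12 u'' + q u' = -1$, $u(y)=0$, which in divergence form reads $(e^{Q(x)} u'(x))' = -2 e^{Q(x)}$. Under (H2), $Q(x)\to -\infty$, so $e^{-Q}$ is not integrable at infinity; since any bounded increasing solution must have $e^{Q(x)}u'(x)\to 0$ (otherwise $u'(x)\sim C e^{-Q(x)}\to\infty$, forcing $u(x)\to\infty$), integrating twice yields
\[
u(x) = 2\int_{y}^{x} e^{-Q(r)} \int_{r}^{\infty} e^{Q(v)}\, dv\, dr.
\]
Hence $\sup_{x>y}\mathbb{E}_x(T_y^x) < \infty$ iff (H3) holds, and the standard equivalence between finiteness of $\limsup_x \mathbb{E}_x(T_y^x)$ and $\infty$ being an entrance boundary (the diffusion analogue of Proposition \ref{prop2.1}) gives (1)$\Leftrightarrow$(2).

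For (2)$\Rightarrow$(3), fix $a>0$. By (H3), one can choose a threshold $y_* > 1$ so large that $2a\int_{y_*}^\infty e^{-Q(r)}\int_r^\infty e^{Q(v)}\, dv\, dr \le 1$, and then set $y_a := y_* + 1$. In the spirit of the Lyapunov function used in the proof of Theorem \ref{th2}, define
\[
J(x) := 2\int_{y_*}^{x} e^{-Q(r)} \int_{r}^{\infty} e^{Q(v)}\, dv\, dr, \qquad x \ge y_*.
\]
Then $J$ is positive and increasing on $(y_*,\infty)$, satisfies $\mathcal{L} J \equiv -1$ on $(y_*,\infty)$, and by the choice of $y_*$, $a J(x) \le a J(\infty) \le 1$. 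Applying Itô's formula to $e^{as} J(X_s)$ up to the stopping time $t\wedge T_{y_a}^x$ and taking expectation (after a standard localisation) gives
\[
\mathbb{E}\bigl(e^{a(t\wedge T_{y_a}^x)} J(X_{t\wedge T_{y_a}^x})\bigr) = J(x) + \mathbb{E}\int_0^{t\wedge T_{y_a}^x} e^{ar}\bigl(aJ + \mathcal{L}J\bigr)(X_r)\, dr \le J(x) \le J(\infty),
\]
since $aJ - 1 \le 0$ on $(y_a,\infty)$, where $X$ lives before the hitting time. Because $X_s \ge y_a$ for $s \le T_{y_a}^x$ and $X_{T_{y_a}^x} = y_a$, one has $J(X_{t\wedge T_{y_a}^x}) \ge J(y_a) > 0$, so monotone convergence in $t$ yields $\sup_{x>y_a}\mathbb{E}(e^{a T_{y_a}^x}) \le J(\infty)/J(y_a) < \infty$.

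Finally (3)$\Rightarrow$(1) is immediate: for $y_a$ as above and $t$ large, Markov's inequality gives $\mathbb{P}(T_{y_a}^x < t) \ge 1 - e^{-at} J(\infty)/J(y_a)$, which is bounded away from zero uniformly in $x$, so $\infty$ is an entrance boundary. The main obstacle in this plan is not the algebra of the ODE but the rigorous verification of the martingale identifications: one must check that the ODE solution $u$ genuinely represents the expected hitting time (via Dynkin's formula applied on $(y, N)$ and letting $N\to\infty$, using that (H2) makes $\infty$ inaccessible), and that the stochastic integral arising from $e^{as}J(X_s)$ is a true martingale after localisation, so that the expectation identity above is legitimate. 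Since the statement is Proposition 7.6 of \cite{Cat}, one may simply invoke that reference rather than redo those technical steps.
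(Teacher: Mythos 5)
The paper does not actually prove this statement: it is quoted verbatim as Proposition~7.6 of Cattiaux et al.\ \cite{Cat}, so your closing remark that one may simply invoke that reference is exactly what the authors do. Your reconstruction is nevertheless essentially the standard argument and is sound in outline: the Green-function formula $\mathbb{E}_x(T_y)=2\int_y^x e^{-Q(r)}\int_r^\infty e^{Q(v)}\,dv\,dr$, the Lyapunov function $J$ with $\mathcal{L}J=-1$ and $aJ\le 1$ for the exponential moment (a faithful transposition of the proof of Theorem~\ref{th2}(1) to the diffusion setting, including the shift $y_a=y_*+1$ to keep $J(y_a)>0$), and Markov's inequality for (3)$\Rightarrow$(1). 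Two points deserve attention. First, (H2) does not imply $Q(x)\to-\infty$ (take $q(x)=-x^{-2}$ for large $x$); what you actually need, namely that $e^{-Q}$ is not integrable at infinity, still holds because $Q$ is eventually decreasing and hence bounded above, so this is only a misstated intermediate claim. Second, and more substantively, your (1)$\Leftrightarrow$(2) step leans on ``the standard equivalence between finiteness of $\limsup_x\mathbb{E}_x(T_y^x)$ and $\infty$ being an entrance boundary''. The direction $\neg$(H3) $\Rightarrow$ $\neg$(entrance) is not a consequence of anything you write: a divergent expected hitting time is a priori compatible with $\lim_{x\to\infty}\mathbb{P}(T_y^x<t)>0$, and closing this implication requires an additional argument (e.g.\ the monotone construction of the process started from $+\infty$, or a renewal-type argument as in the proof of Theorem~\ref{th2}(2)); this is precisely the non-trivial content of the cited proposition. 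So as a self-contained proof your sketch has one genuine gap in the (1)$\Rightarrow$(2) direction (the technical martingale/localisation issues you flag are real but routine); as a justification of the statement it is adequate, since the reference covers it, which is all the paper itself does.
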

We now state the main result of this subsection
\begin{theorem}\label{th5}
Assume that $(H2)$ holds. We have
\begin{itemize}
\item[\rm{1)}] If $(H3)$ does not hold, then for all $y\geq 0$,
\begin{equation*}
\sup_{x>y} T_y^x = \infty \qquad a.s.
\end{equation*}
\item[\rm{2)}] If $(H3)$ holds, then for all $y\geq 0$,
\begin{equation*}
\sup_{x>y} T_y^x < \infty \qquad a.s.,
\end{equation*}
and moreover, there exists some positive constant $c$ such that
\begin{equation*}
\sup_{x>0} \mathbb{E} \big( e^{c T_{0}^x}\big) <\infty .
\end{equation*}
\end{itemize}
\end{theorem}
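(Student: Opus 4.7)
The theorem parallels the combined statements of Theorems \ref{th1} and \ref{th2} in the discrete case, with Proposition \ref{pro3.1} playing the role of Proposition \ref{prop2.1}. Throughout I would use the coupling in which all $X^x$ are driven by a common Brownian motion $B$; by the standard comparison theorem for one-dimensional SDEs this makes $x\mapsto X^x_t$ and hence $x\mapsto T_y^x$ a.s.\ increasing in $x$.

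For part 1, if (H3) fails then by Proposition \ref{pro3.1} $\infty$ is not an entrance boundary, so for every $y>0$ and every $t>0$, $\lim_{x\to\infty}\mathbb{P}(T_y^x<t)=0$. Since under the coupling the event $\{T_y^x<t\}$ is decreasing in $x$, this forces $\mathbb{P}(\sup_{x>y}T_y^x<t)=0$ for every $t$, hence $\sup_{x>y}T_y^x=\infty$ a.s.; the case $y=0$ follows by letting $y\downarrow 0$ and using $T_y^x\le T_0^x$.

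For part 2, Proposition \ref{pro3.1}(3) applied with $a=1$ yields a level $M>0$ such that $\sup_{x>M}\mathbb{E}(e^{T_M^x})<\infty$. Monotone convergence in $x$ then gives $\mathbb{E}(\exp(\sup_{x>M}T_M^x))<\infty$; in particular $T_M:=\sup_{x>M}T_M^x$ is a.s.\ finite with a finite exponential moment. For $y\in[0,M)$, the strong Markov property at $T_M^x$ together with the a.s.\ finiteness of the hitting time of $y$ from $M$ (which holds under (H2), since $q<0$ on $[x_0,\infty)$ prevents the process from drifting off to $\infty$) gives $\sup_{x>y}T_y^x<\infty$ a.s.

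The exponential moment $\sup_{x>0}\mathbb{E}(e^{cT_0^x})<\infty$ for some $c>0$ is obtained by reusing the geometric-trials argument from the proof of Theorem \ref{th2}(2). Fix $T>0$ and let $p:=\mathbb{P}(T_0^M<T)>0$ (positivity being standard for drifted Brownian motion, e.g.\ by Girsanov); by the coupling, $\mathbb{P}(T_0^z<T)\ge p$ for every $z\in(0,M]$. One builds a geometric$(p)$ random variable $\zeta$ in which each failed trial costs time $T$ for the attempt to hit $0$, plus, if at time $T$ the process lies above $M$, at most one additional copy of $T_M$ to return to level $M$ by the strong Markov property. This shows that $T_0^x$ is stochastically dominated, uniformly in $x>0$, by $\zeta T+\sum_{i=1}^\zeta\eta_i$ with $(\eta_i)$ i.i.d.\ copies of $T_M$ independent of $\zeta$; Cauchy--Schwarz together with the monotone-convergence fact $\mathbb{E}(e^{2cT_M})\to 1$ as $c\to 0$ lets one choose $c>0$ small enough that both factors are finite. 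The only real difficulty is the careful verification of this stochastic domination, in particular its uniformity in the starting point $x$, but since the discrete proof transfers word-for-word I would simply refer to it.
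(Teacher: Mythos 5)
Your proof follows the paper's argument essentially verbatim: part 1 is the same entrance-boundary plus monotonicity argument via Proposition \ref{pro3.1}, and part 2 reproduces exactly the geometric-trials construction that the paper itself invokes by reference to the proof of Theorem \ref{th2}. The extra details you supply (handling $y=0$ by monotonicity, positivity of $p$) are harmless refinements of the same route.
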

\begin{proof}
\begin{itemize}
\item[\rm{1)}] If (H3) does not hold, then by Proposition \ref{pro3.1}, $\infty $ is not an entrance boundary for $X$. It means that for all $y>0, t>0$,
\begin{equation*}
\lim_{x\uparrow \infty } \mathbb P (T_y^x <t) = 0.
\end{equation*}
Hence for all $t>0$, since $x\to T^x_y$ is increasing a.s.,
\begin{equation*}
\mathbb P ( \sup_{x>y} T_y^x <t) = 0,
\end{equation*}
hence
\begin{equation*}
\sup_{x>y} T_y^x =\infty  \qquad a.s.
\end{equation*}
\item[\rm{2)}] The result is a consequence of Proposition \ref{pro3.1}. We can prove it by using the same argument as used in the proof of
Theorem \ref{th2}.
\end{itemize}
\end{proof}
It is not obvious when (H3) holds. But from the following result, if $q$ satisfies some explicit conditions, we can decide whether (H3) holds or not.
\begin{proposition}\label{pro3.2}
Suppose that $(H2)$ holds. We have
\begin{itemize}
\item[\rm{1)}] If $$\int_{x_0}^\infty \frac 1{q(x)} dx = -\infty \qquad \text{and} \qquad \limsup_{x\rightarrow \infty } \frac {q^{'}(x)}{q(x)^2}<\infty ,$$ then $(H3)$ does not hold.
\item[\rm{2)}] If there exists $q_0<0$ such that $q(x)\leq q_0$ for all $x\geq x_0$, 
$$\int_{x_0}^\infty \frac 1{q(x)} dx > -\infty \qquad \text{and} \qquad \liminf_{x\rightarrow \infty } \frac {q^{'}(x)}{q(x)^2}> -2 ,$$ then $(H3)$ holds.
\item[\rm{3)}]If 
$$\int_{x_0}^\infty \frac 1{q(x)} dx > -\infty \qquad \text{and} \qquad q^{'}(x)\leq 0 \qquad \forall x\geq x_0, $$ then $(H3)$ holds.
\end{itemize}
\end{proposition}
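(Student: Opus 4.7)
The plan is to reduce all three parts to a single integration-by-parts identity and then read off each case from sign information. Set $\phi(y) := e^{-Q(y)}\int_y^\infty e^{Q(z)}\,dz$, so that (H3) is equivalent to $\int_1^\infty\phi(y)\,dy<\infty$. Using $e^{-Q(y)} = -\frac{1}{2q(y)}\frac{d}{dy}e^{-Q(y)}$, I would integrate by parts in $\int_1^z e^{-Q(y)}\,dy$ after a Tonelli swap of the double integral defining $\int \phi$, which produces the identity
\begin{equation*}
\int_1^\infty \phi(y)\left[1+\frac{q'(y)}{2q(y)^2}\right]dy = \int_1^\infty \frac{1}{2|q(y)|}\,dy + C, \qquad (*)
\end{equation*}
where $C$ is a finite constant coming from the boundary contribution at $y=1$, and the boundary term as $z\to\infty$ vanishes under (H2).

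Given $(*)$, each part follows from a sign argument. Part 3 does not actually need $(*)$: $q'\le 0$ on $[x_0,\infty)$ gives $q(s)\le q(y)<0$ for $s\in[y,z]$, hence $Q(z)-Q(y)\le 2(z-y)q(y)$, so $\phi(y)\le 1/(2|q(y)|)$ pointwise; integrability of $\phi$ is then immediate from $\int 1/q > -\infty$, and $[1,x_0]$ contributes a bounded amount by continuity. For part 2, $\liminf q'/q^2>-2$ yields $\varepsilon>0$ and $y_0$ with $1+q'(y)/(2q(y)^2)\ge \varepsilon$ on $[y_0,\infty)$; the integrand on the LHS of $(*)$ is then $\ge \varepsilon\phi(y)\ge 0$ there, and rearranging gives
\begin{equation*}
\varepsilon\int_{y_0}^\infty \phi(y)\,dy \le \int_1^\infty \frac{1}{2|q(y)|}\,dy + C - \int_1^{y_0}\phi(y)\left[1+\frac{q'(y)}{2q(y)^2}\right]dy < \infty,
\end{equation*}
so (H3) holds.

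For part 1 I would argue by contradiction. Pick $M<\infty$ and $y_0$ with $1+q'(y)/(2q(y)^2)\le M$ on $[y_0,\infty)$, available from $\limsup q'/q^2<\infty$. If $\int\phi<\infty$, then on $[y_0,\infty)$ the positive part of the LHS integrand of $(*)$ is bounded by $M\phi$ and is therefore integrable, while the $[1,y_0]$ contribution is bounded; so the LHS of $(*)$ would be finite, contradicting the RHS which equals $+\infty$ because $\int 1/|q|=\infty$. The main obstacle is making $(*)$ rigorous: the boundary term as $z\to\infty$ requires $e^{Q(z)}/q(z)\to 0$, which follows from (H2) together with the pointwise lower bound $|q(z)|\ge |q(y)|/(1+K(z-y)|q(y)|)$ extractable from $\limsup q'/q^2\le K$; and the sign-indefinite Tonelli swap used to derive $(*)$ needs justification by splitting $q'$ into positive and negative parts (or by working on $[1,N]$ and passing to the limit). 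A minor side point in part 1 is that $M$ can be taken positive, which is automatic since $\limsup q'/q^2\le -2$ would force $1/|q|$ to vanish in finite time, violating (H2).
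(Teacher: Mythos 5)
Your argument is essentially the paper's own. The identity $(*)$ is, up to rearrangement, the paper's integration-by-parts formula (3.1) for $\int s\,e^{-Q}\,dy$ with $s(y)=\int_y^\infty e^{Q(z)}\,dz$, and parts 1) and 2) are then read off by exactly the same sign analysis (an upper bound on $1+q'/(2q^2)$ in part 1, a lower bound $\ge\varepsilon>0$ in part 2); in part 3) you replace the paper's Cauchy mean value theorem by the direct estimate $Q(z)-Q(y)\le 2(z-y)q(y)$, but both routes yield the identical pointwise bound $s(y)e^{-Q(y)}\le -1/(2q(y))$. One assertion is overstated: the boundary term at infinity does \emph{not} vanish under (H2) alone --- for $q(x)=-1/x$ one has $-s(y)e^{-Q(y)}/(2q(y))=y^2/2\to\infty$ --- so $(*)$ is not available as an exact identity in general. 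Your argument survives because it never actually needs the vanishing: in part 1 the term $-s(z)e^{-Q(z)}/(2q(z))$ is nonnegative, so discarding it still forces the left-hand side to be $+\infty$; in part 2 the extra hypothesis $q\le q_0<0$ gives $s(z)e^{-Q(z)}\le 1/(2|q_0|)$, so the boundary term is merely bounded, which is all the rearrangement requires. This is precisely how the paper handles it, so the repair is cosmetic; as in the paper, you should also run the identity on $[x_0,\infty)$ rather than $[1,\infty)$ so that $q$ has no zeros there, and dispose of the case $s\equiv\infty$ separately in part 1 (which your contradiction framing does implicitly, since $\int\phi<\infty$ forces $s<\infty$).
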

\begin{proof}
\begin{itemize}
\item[\rm{1)}]
Define $s(y) := \int_y^\infty  e^{Q(z)} dz$. If $s(x_0)=\infty $, then $s(y)=\infty $ for all $y\geq x_0$, so that (H3) does not hold. \\
We consider the case $s(x_0)<\infty $. Integrating by parts on $\int s e^{-Q} dy$ gives
\begin{equation}\label{eq3.1}
\int_{x_0}^{\infty} s e^{-Q} dy = \int_{x_0}^{\infty} \frac {s}{2q} e^{-Q} 2q dy = \left . \frac{-s}{2q} e^{-Q} \right |_{x_0}^{\infty } - \int_{x_0}^{\infty} \frac {1}{2q}dy - \int_{x_0}^{\infty} s e^{-Q}\frac {q^{'}}{2q^2} dy
\end{equation}
 From $\int_{x_0}^\infty \frac 1{q(x)} dx = -\infty$ and $\frac{-s}{2q} e^{-Q}(\infty )\geq 0$,  \eqref{eq3.1} implies that 
$$\int_{x_0}^\infty s e^{-Q}(1+ \frac {q^{'}}{2q^2}) dy =\infty .$$
Since $\limsup_{x\rightarrow \infty } \frac {q^{'}(x)}{q(x)^2}<\infty $, then $\int_{x_0}^\infty s e^{-Q} dy =\infty $. Condition (H3) does not hold.
\item[\rm{2)}] We can easily deduce from $q(x)\leq q_0$ for all $x\geq x_0$ that $s(y)$ tends to zero as $y$ tends to infinity, and $s(y) e^{-Q(y)}$ is bounded in $y\geq x_0$. Because $\int_{x_0}^\infty \frac 1{q(x)} dx > -\infty$, \eqref{eq3.1} implies that $s e^{-Q} (1+\frac {q^{'}}{2q^2})$ is integrable. Then thanks to the condition $\liminf_{x\rightarrow \infty } \frac {q^{'}(x)}{q(x)^2}> -2$, we conclude that (H3) holds.
\item[\rm{3)}]From $q(x)\leq q(x_0)<0$ for all $x\geq x_0$, we can easily deduce that $Q(y) \rightarrow -\infty $ and $s(y)\rightarrow 0$ as $y\rightarrow \infty $. Applying the Cauchy's mean value theorem to $s(y)$ and $q_1(y) := e^{Q(y)}$, we have for all $y\geq x_0$, there exists $\xi \in (y,\infty )$ such that
\begin{equation*}
\frac{\int_y^\infty e^{Q(z)}dz}{e^{Q(y)}} = \frac{s^{'}(\xi )}{q_1^{'}(\xi )}= -\frac {1}{2q(\xi )}.
\end{equation*}
Because $q^{'}(x)\leq 0$ for all $x\geq x_0$, we obtain
\begin{equation*}
s(y) e^{-Q(y)}\leq -\frac {1}{2q(y)}, \qquad \text{for all} \qquad y\geq x_0.
\end{equation*}
Hence 
\begin{equation*}
\int_{x_0}^\infty s(y) e^{-Q(y)} dy \leq -\int_{x_0}^\infty\frac {1}{2q(y)} dy <\infty .
\end{equation*}
Then (H3) holds.
\end{itemize}
\end{proof}
\begin{example}
We are interested in the case that $q$ is a polynomial. More precisely, we consider the function $q$ satisfying (H$2$) and for all $x\geq x_0$,
$$q(x)= -x^{\alpha } \qquad \alpha > -1.$$
We have 
$$\lim_{x\rightarrow \infty } \frac {q^{'}(x)}{q(x)^2}=\lim_{x\rightarrow \infty } \frac{\alpha x^{\alpha -1}}{x^{2\alpha }} =\lim_{x\rightarrow \infty } \frac{\alpha }{x^{\alpha +1}}=0.$$
Hence condition (H3) holds if and only if 
$$\int_{x_0}^\infty \frac 1{-x^{\alpha }} dx > -\infty \Leftrightarrow \alpha >1.$$
\end{example}
\subsection{Height of the continuous forest of trees}
We consider the process $\{Z_t^x, t\geq 0\}$ solution of \eqref{eq1}. It follows from the Ito formula that the process $Y_t^x=\sqrt{Z_t^x}$ solves the SDE
\begin{align}\label{eq3.2}
dY_t^x = \frac {f((Y_t^x)^2)-1}{2Y_t^x} dt+ dW_t, \qquad Y_0^x = \sqrt{x}.
\end{align}
Note that the height of the process $Z^x$ is
\begin{equation*}
T^x= \inf\{t>0, Z_t^x= 0\} = \inf\{t>0, Y_t^x= 0\} .
\end{equation*}
We now establish the large $x$ behaviour of $T^x$.
\begin{theorem}\label{th6}
Assume that $f$ is a function satisfying (H1) and that there exists $a_0>0$ such that $f(x)\not= 0$ for all $x\geq a_0$. If $\int_{a_0}^\infty \frac 1{\mid f(x)\mid } dx =\infty$, then
 $$T^x \rightarrow \infty \quad a.s. \quad as \quad x\rightarrow \infty .$$
\end{theorem}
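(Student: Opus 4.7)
The plan is to transform the problem via $Y^x_t := \sqrt{Z^x_t}$ and invoke Theorem~\ref{th5}(1). By It\^o's formula applied to \eqref{eq1}, $Y^x$ solves \eqref{eq3.2}, so it is a drifted Brownian motion with drift $q(y) = (f(y^2)-1)/(2y)$ killed at the origin, and $T^x = \inf\{t : Y^x_t = 0\}$. Since $x \mapsto T^x$ is a.s.\ increasing, it suffices to show $\sup_{x>0} T^x = \infty$ a.s., and Theorem~\ref{th5}(1) reduces this to verifying (H2) and the failure of (H3) for $q$. The behavior near zero in (H2) is automatic because $f(y^2) \to f(0) = 0$ gives $q(y) \to -\infty$. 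The other part of (H2) depends on the sign of $f$ on $[a_0,\infty)$, which by continuity and nonvanishing is constant there, and I split into two cases accordingly.

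Suppose first that $f<0$ on $[a_0,\infty)$ (the main case). Then $q(y)<0$ for $y\geq\sqrt{a_0}$ and (H2) holds. To show (H3) fails, I apply Proposition~\ref{pro3.2}(1). The substitution $u=y^2$ transforms the first condition into
\[
\int^{\infty}\frac{dy}{q(y)} \;=\; -\int^{\infty}\frac{du}{1+|f(u)|},
\]
which I claim equals $-\infty$ under the hypothesis, via the following use of (H1): the one-sided Lipschitz property forces $|f|$ to decrease (with increasing $u$) at rate at most $\theta$, so any ``dip'' of $|f|$ from a large value $M$ down to a small value $\varepsilon$ must span an interval of length at least $(M-\varepsilon)/\theta$; on such intervals an elementary computation shows that $\int 1/|f|$ and $\int 1/(1+|f|)$ diverge together, so the hypothesis $\int^{\infty} du/|f(u)| = \infty$ transfers. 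The regularity condition $\limsup q'/q^2 <\infty$ needed by Proposition~\ref{pro3.2}(1) is automatic when $f\in C^1$; for general continuous $f$ I would approximate $f$ from above by $C^1$ functions $f_n \leq f$ on $[a_0,\infty)$ satisfying (H1) and $\int 1/|f_n| = \infty$, use Yamada--Watanabe comparison applied to \eqref{eq1} to obtain $T^x \geq T^{x,n}$, and pass to the limit.

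The complementary case $f>0$ on $[a_0,\infty)$ is handled by a cruder comparison, and in fact the integral hypothesis is automatic here since $|f(x)|\leq\theta x$ by (H1). Setting $M := \max(0,-\inf_{[0,a_0]} f) < \infty$ gives $f\geq -M$ on $[0,\infty)$, so Yamada--Watanabe for \eqref{eq1} yields $Z^x \geq \bar Z^x$ a.s., where $\bar Z$ solves $d\bar Z = -M\,dt + 2\sqrt{\bar Z}\,dW$. The transformed $\bar Y = \sqrt{\bar Z}$ has the smooth explicit drift $\bar q(y) = -(M+1)/(2y)$; (H2) is immediate, and Proposition~\ref{pro3.2}(1) applies directly because $\int^\infty dy/\bar q = -\infty$ and $\bar q'/\bar q^2 \equiv 2/(M+1)$. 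Theorem~\ref{th5}(1) then gives $\sup_x\bar T^x = \infty$ a.s., and $T^x \geq \bar T^x$ concludes. The main obstacle is the measure-theoretic step in the main case: the implication $\int 1/|f| = \infty \Rightarrow \int 1/(1+|f|) = \infty$ is false for general continuous $f$ but becomes true under (H1), and establishing it requires the one-sided Lipschitz control of dips of $|f|$ described above.
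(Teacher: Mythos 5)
Your overall strategy --- pass to $Y^x=\sqrt{Z^x}$ via \eqref{eq3.2}, check (H2), and rule out (H3) through Proposition \ref{pro3.2}(1) so that Theorem \ref{th5}(1) applies --- is the same as the paper's, and your treatment of the case $f>0$ on $[a_0,\infty)$ (comparison with constant drift $-M$) is fine. The gap is in the main case: you apply Proposition \ref{pro3.2}(1) directly to $q(y)=(f(y^2)-1)/(2y)$ and assert that the condition $\limsup_{y\to\infty} q'(y)/q(y)^2<\infty$ is ``automatic when $f\in C^1$''. It is not. One computes $q'(y)=f'(y^2)-\tfrac{f(y^2)-1}{2y^2}$ and $q(y)^2=\tfrac{(1-f(y^2))^2}{4y^2}$, so $q'/q^2$ contains the term $4y^2f'(y^2)/(1-f(y^2))^2$. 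Hypothesis (H1) only bounds $f'$ from \emph{above} by $\theta$, and in the divergent-integral regime $|f(u)|$ may grow as slowly as $\log u$; a smooth function such as $f(u)=-\log u+\tfrac{\theta}{2}\sin u$ for large $u$ (suitably extended near $0$) satisfies (H1), is eventually negative with $\int^\infty du/|f(u)|=\infty$, yet along the points where $\cos(y^2)=1$ one gets $q'/q^2\sim 2\theta y^2/(\log y^2)^2\to\infty$. So Proposition \ref{pro3.2}(1) simply does not apply to your $q$, and your mollification fix does not help, since the obstruction is not smoothness but the size of $f'$ relative to $f^2$.

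The paper's proof avoids exactly this by inserting one more comparison: with $\beta>\theta$, it bounds $Y^x$ from below by the diffusion with drift $f_1(y)=-\tfrac{\beta y^2-f(y^2)+1}{2y}$. Since (H1) makes $g(u):=\beta u-f(u)+1$ positive and \emph{increasing}, this drift is negative everywhere (so no case split on the sign of $f$ is needed) and $f_1'/f_1^2\le 2/g(y^2)\le 2$, so Proposition \ref{pro3.2}(1) applies; the required divergence $\int^\infty dy/f_1(y)=-\int^\infty du/g(u)=-\infty$ is then exactly Lemma \ref{le2.2}. Incidentally, the ``dip'' argument you flag as your main obstacle is unnecessary: the implication $\int^\infty du/|f|=\infty\Rightarrow\int^\infty du/(1+|f|)=\infty$ (for $f<0$) follows at once from Lemma \ref{le2.2} with $a=-1$, since $1-f(u)\le u-f(u)$ for $u\ge1$. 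The genuinely missing idea in your proposal is the comparison with a drift built from a monotone function.
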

\begin{proof}
Let $\beta $ be a constant such that $\beta >\theta $. By a well-known comparison theorem, $Y_t^x\geq Y_t^{1,x}$, where $Y_t^{1,x}$ solves
\begin{align*}
dY_t^{1,x} = -\frac {\beta (Y_t^{1,x})^2-f((Y_t^{1,x})^2)+1}{2Y_t^{1,x}} dt+ dW_t, \qquad Y_0^{1,x} = \sqrt{x},
\end{align*}
Note that the function $\beta x -f(x)+1$ is positive and increasing, then $f_1(x):= -\frac{\beta x^2-f(x^2)+1}{2x}$ satisfies (H2), and 
$$\limsup_{x\rightarrow \infty } \frac {f_1^{'}(x)}{f_1(x)^2}<\infty.$$
Moreover there exists $x_1>0$ such that $\beta x-f(x)\ge1$ for all $x\ge x_1$, hence
\begin{align*}
\int_1^\infty \frac 1{f_1(x)} dx &= -\int_1^\infty \frac {2x}{\beta x^2-f(x^2)+1} dx \\
&=-\int_1^\infty \frac 1{\beta x-f(x)+1} dx\\
&\le-\int_1^{x_1} \frac 1{\beta x-f(x)+1} dx-2\int_{x_1}^\infty \frac 1{\beta x-f(x)}dx \\
&=-\infty,
\end{align*}
again by Lemma \ref{le2.2}. The result now follows readily from Theorem \ref{th5} and Proposition \ref{pro3.2}.
\end{proof}
\begin{theorem}\label{th7}
Assume that $f$ is a function satisfying (H1) and that there exists $a_0>0$ such that $f(x)\not= 0$ for all $x\geq a_0$. If $\int_{a_0}^\infty \frac 1{\mid f(x)\mid } dx <\infty$, then
 \begin{equation*}
\sup_{x>0} T^x < \infty \qquad a.s.,
\end{equation*}
and moreover, there exists some positive constant $c$ such that
\begin{equation*}
\sup_{x>0} \mathbb{E} \big( e^{c T^x}\big) <\infty .
\end{equation*}
\end{theorem}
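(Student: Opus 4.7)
The plan is to mirror the proof of Theorem~\ref{th6}, but in reverse: instead of lower-bounding $T^x$ via a lower bound on $Y^x := \sqrt{Z^x}$, upper-bound $T^x$ via an upper bound on $Y^x$. Keep $Y^x$ solving \eqref{eq3.2}, so $T^x = \inf\{t > 0 : Y^x_t = 0\}$, and denote its drift $q(y) := (f(y^2) - 1)/(2y)$.

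The first step is to introduce a comparison process $Y^{2,x}$ driven by the same Brownian motion $W$, solving
\begin{equation*}
dY^{2,x}_t = q_2(Y^{2,x}_t)\,dt + dW_t, \qquad Y^{2,x}_0 = \sqrt{x},
\end{equation*}
for a continuous drift $q_2 : (0,\infty) \to \mathbb{R}$ with $q_2 \geq q$. By the one-dimensional SDE comparison theorem, $Y^{2,x}_t \geq Y^x_t$ a.s.\ for all $t \geq 0$, and hence $T^x \leq T^{2,x} := \inf\{t > 0 : Y^{2,x}_t = 0\}$. If $q_2$ is chosen to satisfy (H2) and (H3), then Theorem~\ref{th5} (part 2) applied to $Y^{2,x}$ yields $\sup_{x > 0} T^{2,x} < \infty$ a.s.\ and $\sup_{x > 0} \mathbb{E}(e^{c T^{2,x}}) < \infty$ for some $c > 0$, and both bounds transfer at once to $T^x$.

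To construct $q_2$, take $q_2(y) := (h(y^2) - 1)/(2y)$ for a suitable function $h : [0,\infty) \to \mathbb{R}$ with $h \geq f$ pointwise, $h(v) \to -\infty$ as $v \to \infty$, and $\int^{\infty} dv/|h(v)| < \infty$. A natural candidate is a smoothed version of $f/2$ on $[a_0, \infty)$ (where $f < 0$, so $f/2 \geq f$), extended to $[0, a_0]$ in such a way that $h \geq f$ everywhere; the integrability $\int 1/|h| < \infty$ then follows from $\int 1/|f| < \infty$ up to a constant. Property (H2) for $q_2$ is immediate, since $q_2 \to -\infty$ both as $y \to \infty$ (from $h \to -\infty$) and as $y \to 0^+$ (from the $-1/(2y)$ singularity, recall $h(0) = f(0)/2 = 0$). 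For (H3), apply Proposition~\ref{pro3.2}: the substitution $v = y^2$ reduces $\int^{\infty} 1/q_2(y)\,dy > -\infty$ to $\int^{\infty} 1/|h(v)|\,dv < \infty$, and the remaining sufficient condition---either $q_2$ non-increasing (part 3) or $\liminf q_2'/q_2^2 > -2$ (part 2) for $y$ large---is arranged through the specific choice of $h$.

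The main obstacle is the construction of $h$: it must simultaneously majorize $f$, have the regularity needed for Proposition~\ref{pro3.2}, decrease fast enough to satisfy one of its sufficient conditions, and still retain $\int 1/|h| < \infty$. Hypothesis (H1) is crucial here, since $\theta v - f(v)$ being non-decreasing restricts the upward oscillations of $f$ and so allows a smoothed monotone upper envelope of $f/2$ to be built without losing integrability. Without (H1), $f$ could contain isolated upward spikes that would force $|h|$ to be much smaller than $|f|/2$ on intervals of macroscopic length, destroying the condition $\int 1/|h| < \infty$.
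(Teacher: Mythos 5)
Your overall architecture is exactly the paper's: dominate $Y^x=\sqrt{Z^x}$ from above by a diffusion $Y^{2,x}$ with a regularized drift $q_2\ge q$, verify (H2)--(H3) for $q_2$ via Proposition \ref{pro3.2}, and conclude by Theorem \ref{th5}. The gap is that you name the construction of the majorant $h$ as ``the main obstacle'' and then do not carry it out, and the candidate you propose --- a smoothed monotone upper envelope of $f/2$ --- does not by itself satisfy either sufficient condition of Proposition \ref{pro3.2}. For part 3 you would need $q_2$ non-increasing; but wherever the envelope is locally constant, say $h\equiv -M$ on an interval, $q_2(y)=(h(y^2)-1)/(2y)=-(M+1)/(2y)$ is \emph{increasing} in $y$, so part 3 fails precisely on the flat stretches a monotone envelope typically has. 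For part 2 you need $\liminf q_2'/q_2^2>-2$, which after substituting $v=y^2$ amounts to an \emph{upper} bound on how fast $|h|$ may grow, roughly $|h'(v)|\lesssim |h(v)|^2/(2v)$. Hypothesis (H1) does not supply this: it restricts the \emph{upward} oscillations of $f$ (via $\theta v-f(v)$ increasing), whereas the danger for part 2 is a too-rapid \emph{downward} plunge of $f$ --- and hence of any envelope that tracks it --- which (H1) does not constrain at all. (Note also that $f$ is merely continuous, so the smoothing is not a cosmetic step; it must be done with quantitative control of the derivative, and your appeal to (H1) is aimed at the wrong difficulty.) Likewise, the claim that $\int 1/|h|<\infty$ follows ``up to a constant'' from $\int 1/|f|<\infty$ for a monotone envelope is not immediate and itself needs an (H1)-based argument.

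The paper resolves exactly this tension with a specific device you are missing. Writing the drift as $(\beta y^2-h(y^2))/(2y)$ with $h(x)=\beta x-f(x)+1$ positive and increasing ($\beta>\theta$), it replaces $h$ by a minorant $g$ built from $a_1=h(1)$, $a_n=\min\{h(n),2a_{n-1}\}$, interpolated and smoothed. The cap $a_n\le 2a_{n-1}$ enforces the growth control that yields $\liminf q_2'/q_2^2\ge 0>-2$ (the paper checks $xg'(x)/g(x)^2\to 0$), while the telescoping bound $\frac{1}{a_n}\le\frac{1}{h(n)}+\frac{1}{2h(n-1)}+\dots+\frac{1}{2^{n-1}h(1)}$ gives $\sum_n 1/a_n\le 2\sum_n 1/h(n)<\infty$, so integrability survives the capping (here Lemma \ref{le2.2} converts $\int 1/|f|<\infty$ into $\sum 1/h(n)<\infty$). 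This doubling construction is the substantive content of the proof; without it, or an equivalent regularization, the verification of (H3) for your $q_2$ does not go through.
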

\begin{proof}
We can rewrite the SDE \eqref{eq3.2} as (with again $\beta>\theta$)
\begin{align*}
dY_t^x = \frac {\beta (Y_t^x)^2 -h((Y_t^x)^2)}{2Y_t^x} dt+ dW_t, \qquad Y_0^x = \sqrt{x},
\end{align*}
where $h(x):= \beta x-f(x)+1$ is a positive and increasing function. By Lemma \ref{le2.2}, we have $\int_1^\infty \frac 1{h(x)} dx <\infty$ which is equivalent to $\sum_{n=1}^{\infty } \frac 1{h(n)}<\infty $. Let 
\begin{equation*}
a_1 = h(1), \qquad a_n = \min \{h(n), 2 a_{n-1}\} \quad \forall n>1.
\end{equation*}
It is easy to see that for all $n>1$,
\begin{equation*}
a_{n-1}<a_n \leq h(n) , \qquad \qquad \frac{a_n}{a_{n-1}}\leq 2.
\end{equation*}
We also have 
\begin{align*}
\frac 1{a_1} &= \frac 1{h(1)} \\
\frac 1{a_2} &\leq \frac 1{h(2)} +\frac 1{2a_1}=\frac 1{h(2)} +\frac 1{2h(1)} \\
\frac 1{a_3} &\leq \frac 1{h(3)} +\frac 1{2a_2} \leq \frac 1{h(3)}+\frac 1{2h(2)} +\frac 1{4h(1)}\\
&..............\\
\frac 1{a_n} &\leq \frac 1{h(n)} +\frac 1{2a_{n-1}} \leq \frac 1{h(n)}+\frac 1{2h(n-1)} +...+\frac 1{2^{n-1}h(1)}.
\end{align*}
Therefore
\begin{equation*}
\sum_{n=1}^{\infty } \frac 1{a_n}\leq 2\sum_{n=1}^{\infty } \frac 1{h(n)}<\infty .
\end{equation*}
Now, we define a continuous increasing function $g$ as follows. We first draw a broken line which joins the points $(n, a_n)$ and is the graph of $h_1$. Define the function $h_2$ as follows.
\begin{align*}
h_2(x) \quad = \quad
\begin{cases}
& h(x), \quad 0\leq x\leq 1\\
& h_1(x), \quad x\geq 1.
\end{cases}
\end{align*}
We then smoothen all the nodal points of the graph of $h_2$ to obtain a smooth curve which is the graph of an increasing function $g_1$. Let $g(x)= \frac 12 g_1(x)$. We have for all $n\geq 1$ and $x\in [n,n+1)$, $$h(x)\geq h(n)\geq a_n\geq \frac12 a_{n+1}=g(n+1)\geq g(x).$$ 
By the comparison theorem, $Y_t^x\leq Y_t^{2,x}$, where $Y_y^{2,x}$ solves
\begin{align*}
dY_t^{2,x} = \frac {\beta (Y_t^{2,x})^2 -g((Y_t^{2,x})^2)}{2Y_t^{2,x}} dt+ dW_t, \qquad Y_0^{2,x} = \sqrt{x}.
\end{align*}
Since
\begin{equation*}
\sum_{n=1}^{\infty } \frac 1{g(n)}= 2\sum_{n=1}^{\infty } \frac 1{a_n}<\infty,
\end{equation*}
we deduce that $\int_{1}^\infty \frac 1{g(x)} dx <\infty$, and $\frac{g(x)}{x} \rightarrow \infty $ as $x\rightarrow \infty $, by Lemma \ref{le2.2}. Let $f_2(x):= \frac {\beta x^2 -g(x^2)}{2x}$, then there exists $x_1>0, q_1<0$ such that $f_2(x)<q_1$ for all $x\geq x_1$, and 
\begin{equation*}
\int_{x_1}^\infty \frac 1{f_2(x)} dx= \int_{x_1}^\infty \frac {2x}{\beta x^2 -g(x^2)} dx=\int_{x_1^2}^\infty \frac {1}{\beta x -g(x)} dx>-\infty .
\end{equation*}
Moreover, 
\begin{equation*}
\liminf_{x\rightarrow \infty } \frac {f_2^{'}(x)}{f_2(x)^2}= \liminf_{x\rightarrow \infty } \frac {-4x g^{'}(x)}{g(x)^2}.
\end{equation*}
But for all $x\in [n, n+1)$,
 \begin{equation*}
\frac{g^{'}(x)x}{g(x)^2}\leq  \frac{(n+1)}{g(n)^2} \max_{i\in \{n-1, n, n+1\}} \{g(i+1)-g(i)\}<\frac{(n+1)g(n+2)}{g(n)^2}\leq \frac{4(n+1)}{g(n)}\rightarrow 0, 
\end{equation*}
as $n\rightarrow \infty $. The result follows from Theorem \ref{th5} and Proposition \ref{pro3.2}.

\end{proof}

\subsection{Total mass of the continuous forest of trees}
Recall that in the continuous case, the total mass of the genealogical tree is given as
$$S^x= \int_0^{T^x} Z_t^x dt$$
Consider the increasing process
\begin{equation*}
A^x_t= \int_0^{t} Z_s^x ds, t\geq 0,
\end{equation*}
and the associated time change 
\begin{equation*}
\eta^x (t) = \inf\{s>0, A_s>t\}.
\end{equation*}
We now define $U_t^x= \frac 12 Z^x\circ \eta^x (t) , t\geq 0$. It is easily seen that the process $U^x$ solves the SDE
\begin{equation}\label{eq3.3}
dU_t^x = \frac{f(2 U_t^x)}{4 U_t^x} dt+ dW_t, \qquad U_0^x= \frac {x}2.
\end{equation}
Let $\tau ^x := \inf\{t>0, U_t^x= 0\}$. It follows from above that $\eta^x (\tau ^x)=T^x$, hence $S^x= \tau ^x$. 
We have
\begin{theorem}\label{th8}
Suppose that the function $\frac{f(x)}{x}$ satisfies (H1) and there exists $a_0>0$ such that $f(x)\not= 0$ for all $x\geq a_0$.
\begin{itemize}
\item[\rm{1)}] If $\int_{a_0}^\infty \frac {x}{\mid f(x)\mid } dx =\infty$ then
 $$S^x \rightarrow \infty \quad a.s. \quad as \quad x\rightarrow \infty .$$
\item[\rm{2)}] If $\int_{a_0}^\infty \frac {x}{\mid f(x)\mid } dx <\infty$ then
 \begin{equation*}
\sup_{x>0} S^x < \infty \qquad a.s.,
\end{equation*}
and moreover, there exists some positive constant $c$ such that
\begin{equation*}
\sup_{x>0}\mathbb{E}\big( e^{c S^x}\big) <\infty .
\end{equation*}
\end{itemize}
\end{theorem}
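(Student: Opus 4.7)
The plan is to reduce Theorem \ref{th8} to Theorem \ref{th5}, exploiting the time-change identity $S^x = \tau^x$ where $\tau^x := \inf\{t > 0 : U^x_t = 0\}$ and $U^x$ is the Kolmogorov diffusion solving \eqref{eq3.3} with drift $q(u) := f(2u)/(4u)$. Monotonicity of $x \mapsto U_0^x = x/2$ transmits monotonicity to $\tau^x$, so once I control the extinction behavior of $U^x$ in $x$, both statements follow. Hypothesis (H2) for $q$ holds: since $f(x)/x$ satisfies (H1) one has $q(u) \le \theta u$ near $0$ (bounded from above) and the sign assumption on $f$ combined with continuity gives $q(u) < 0$ for $u \ge a_0/2$. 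The substitution $v = 2u$ converts the Proposition \ref{pro3.2} integral test for $q$ into $\int v\, dv / |f(v)|$, which is exactly the dichotomy in the hypothesis.

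For Part 1, I would construct a pathwise minorant $V^x \le U^x$ by letting $V^x$ solve $dV = [q(V) - \beta V]\, dt + dW$ with $V_0 = x/2$, where $\beta > \theta$; the comparison theorem applies and gives $\tau^x_V \le \tau^x = S^x$. Applying Lemma \ref{le2.2} to $\tilde f(x) := f(x)/x$ (which satisfies (H1) by assumption) with $a = -\beta$, the divergence $\int x\, dx/|f(x)| = \infty$ transfers to $\int x\, dx/|\beta x^2 - f(x)| = \infty$, which after $v = 2u$ gives $\int du/|\tilde q(u)| = \infty$ for the drift $\tilde q(u) := q(u) - \beta u$ of $V^x$. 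A small $C^1$-smoothing of $\tilde q$ (dominating from below so that the comparison is preserved) yields the extra condition $\limsup \tilde q'/\tilde q^2 < \infty$, and Proposition \ref{pro3.2}(1) shows (H3) fails for $V^x$. Theorem \ref{th5}(1) then gives $\sup_x \tau^x_V = \infty$ a.s., so by monotonicity $S^x \to \infty$ a.s.\ as $x \to \infty$.

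For Part 2, I would build a pathwise majorant $W^x \ge U^x$ by replacing $q$ with a less negative drift $\hat q(u) := -g(2u)/(4u)$, where $g$ is smooth, positive, increasing, satisfies $g(x) \le -f(x)$ for $x \ge a_0$ (so that $\hat q \ge q$ and comparison yields $W^x \ge U^x$, hence $S^x = \tau^x \le \tau^x_W$), and retains $\int_1^\infty x\, dx/g(x) < \infty$. The function $g$ is constructed exactly as in the proof of Theorem \ref{th7}: Lemma \ref{le2.2} applied to $f(x)/x$ gives $f(x)/x^2 \to -\infty$, so $-f$ is eventually positive and unbounded; set $a_1 := -f(1)$ and $a_n := \min\{-f(n),\, 2 a_{n-1}\}$, whence $a_{n-1} < a_n \le -f(n)$ and $a_n/a_{n-1} \le 2$; the telescoping estimate of Theorem \ref{th7} adapted with an extra factor of $n$ gives $\sum n/a_n \le 2 \sum n/|f(n)| < \infty$; and $g$ is the smoothed interpolant of $(n, a_n/2)$. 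Then $\int du/\hat q(u) = -\int v\, dv/g(v) > -\infty$, and the bound $a_n/a_{n-1} \le 2$ controls the slope of $g$ enough to ensure $\hat q'(u)/\hat q(u)^2 \to 0$, so Proposition \ref{pro3.2}(2) gives (H3) for $\hat q$. Theorem \ref{th5}(2) applied to $W^x$ then provides some $c > 0$ with $\sup_x \mathbb E(e^{c\tau^x_W}) < \infty$, and $\sup_x \mathbb E(e^{c S^x}) < \infty$ follows.

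The main obstacle is the smoothing construction in Part 2: producing a $C^1$ envelope $g$ that simultaneously (i) stays below $-f$ so the comparison points the right way, (ii) is large enough that $\int x\, dx/g(x) < \infty$, and (iii) has a derivative tame enough that $\hat q'/\hat q^2 \to 0$. The geometric-growth bookkeeping of the $a_n$ recursion in Theorem \ref{th7} addresses all three requirements at once; the only genuinely new work is propagating the extra factor of $n$ (equivalently $x$) through the summability estimate, which is routine given the template of Theorem \ref{th7}. Part 1, by contrast, is a direct transcription of the argument in Theorem \ref{th6}, with $f$ playing the role it had there and the integrability invariance supplied by Lemma \ref{le2.2} applied to $f(x)/x$ rather than $f$ itself.
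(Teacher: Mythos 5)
Your proposal follows the same route as the paper: the time change reducing $S^x$ to the extinction time $\tau^x$ of the diffusion $U^x$ of \eqref{eq3.3}, comparison with autonomous drifts, the substitution $v=2u$ turning the integral test of Proposition \ref{pro3.2} into $\int v\,dv/|f(v)|$, and Lemma \ref{le2.2} applied to $f(x)/x$ to absorb the linear term. Part 1 is correct and coincides with the paper's argument (the paper writes the drift as $\beta u-h(u)$ with $h(u)=\beta u-f(2u)/(4u)$ positive and increasing, and compares with the drift $-h$, which is exactly your $\tilde q$); your observation that a $C^1$-regularisation is needed before invoking Proposition \ref{pro3.2} is a legitimate technical point that the paper leaves implicit.

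In Part 2, however, one step would fail as written. You take $\hat q(u)=-g(2u)/(4u)$ with $g$ positive and $g\le -f$ only on $[a_0,\infty)$, and assert that this gives $\hat q\ge q$ on all of $(0,\infty)$ and hence $W^x\ge U^x$. But the hypotheses allow $f>0$ on part of $(0,a_0)$: for instance $f(x)=\theta x^2-x^3$ with $\theta>0$ has $f(x)/x$ satisfying (H1), satisfies the Part 2 integral condition, and is positive on $(0,\theta)$. There $q(u)=f(2u)/(4u)>0$ while $\hat q(u)<0$, so the comparison points the wrong way near the origin; moreover $a_1:=-f(1)$ could then be non-positive, breaking the recursion defining $(a_n)$. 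The paper's template (proof of Theorem \ref{th7}, which its Part 2 explicitly invokes) avoids this by keeping the decomposition $q(u)=\beta u-h(u)$ with $h$ positive and increasing on all of $(0,\infty)$, building the tame minorant $g$ of $h$ so that it \emph{coincides with} (half of) $h$ on $[0,1]$ and is the smoothed broken line through $(n,a_n/2)$ only beyond that, and taking the majorant drift $\beta u-g(u)\ge \beta u-h(u)=q(u)$ everywhere. With that modification your summability bookkeeping (the extra factor of $n$ through $\sum n/a_n$) and the slope estimate for $\hat q'/\hat q^2$ go through exactly as you describe, and the argument is complete.
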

\begin{proof}
Note that we can rewrite the SDE \eqref{eq3.3} as
\begin{equation*}
dU_t^x = \big( \beta U_t^x-h(U_t^x) \big) dt+ dW_t, \qquad U_0^x= \frac {x}2,
\end{equation*}
where $h(x):= \beta x-\frac {f(2x)}{4x}$, with again $\beta>\theta$, is a positive and increasing function.
\begin{itemize}
\item[\rm{1)}] By the comparison theorem, $U_t^x\geq U_t^{1,x}$, where $U_t^{1,x}$ solves
\begin{align*}
dU_t^{1,x} = -h(U_t^{1,x}) dt+ dW_t, \qquad U_0^{1,x} = \frac {x}2.
\end{align*}
The result follows from Theorem \ref{th5}, Proposition \ref{pro3.2} and Lemma \ref{le2.2}.
\item[\rm{2)}] The result is a consequence of Theorem \ref{th5} and Proposition \ref{pro3.2}. We can prove it by using the same argument as used in the proof of Theorem \ref{th7}.
\end{itemize}
\end{proof}
\section{Some examples}
In this section we will discuss some special cases to illustrate our results.
\begin{example}
An important example is the case of a logistic interaction, where 
$$f(x):= ax-bx^2, \qquad a\in \mathbb R, b\in \mathbb R_{+}.$$
There exists a positive constant $a_0$ such that $f(x)<0$ for all $x\geq a_0$, and
$$\int_{a_0}^\infty \frac {1}{\mid f(x)\mid } dx =\int_{a_0}^\infty \frac {1}{bx^2-ax}dx<\infty, \qquad \int_{a_0}^\infty \frac {x}{\mid f(x)\mid } dx=\int_{a_0}^\infty \frac {x}{bx^2-ax} dx =\infty .$$
Hence in this case, there exists some positive constant $c$ such that
\begin{align*}
\sup_{m>0} \mathbb E \big(e^{c H^m }\big) < \infty , \qquad \sup_{x>0} \mathbb E \big(e^{c T^x }\big) < \infty,
\end{align*}
and 
$$\sup_{m>0} L^m = \infty \quad a.s., \qquad \sup_{x>0} S^x = \infty \quad a.s.$$
\end{example}
\begin{example}
We consider the case where $f$ is a function satisfying (H1) and for all $x\geq 2$,
$$f(x)= -x^\alpha (\log x)^\gamma  ,\qquad \alpha \geq 0, \gamma \geq 0.$$
Note that 
\begin{align*}
\int_2^\infty \frac 1{x^\alpha (\log x)^\gamma} dx \qquad
\begin{cases}
& =\infty , \quad \text{if}\quad \alpha <1 \quad \text{or} \quad \alpha =1, \gamma \leq 1\\
& <\infty , \quad \text{if}\quad \alpha >1 \quad \text{or} \quad \alpha =1, \gamma > 1.
\end{cases}
\end{align*}
Hence 
$$\sup_{m>0} H^m = \infty \quad a.s., \qquad \sup_{x>0} T^x = \infty \quad a.s.$$
if $\alpha <1$ or $\alpha =1, \gamma \leq 1$, while there exists some positive constant $c$ such that
\begin{align*}
\sup_{m>0} \mathbb E \big(e^{c H^m }\big) < \infty , \qquad \sup_{x>0} \mathbb E \big(e^{c T^x }\big) < \infty
\end{align*}
if $\alpha >1$ or $\alpha =1, \gamma > 1$. Concerning the length (resp. the total mass) of the genealogical forest of trees we have
$$\sup_{m>0} L^m = \infty \quad a.s., \qquad \sup_{x>0} S^x = \infty \quad a.s.$$
if $\alpha <2$ or $\alpha =2, \gamma \leq 1$, while there exists some positive constant $c$ such that
\begin{align*}
\sup_{m>0} \mathbb E \big(e^{c L^m }\big) < \infty , \qquad \sup_{x>0} \mathbb E \big(e^{c S^x }\big) < \infty
\end{align*}
if $\alpha >2$ or $\alpha =2, \gamma > 1$.
\end{example}
\paragraph{Acknowledgement} The authors want to thank an anonymous Referee, whose remarks
resulted in a significant improvement of the paper.

\bibliographystyle{plain}

\end{document}